\newcommand{\R}{\mathbb{R}}
\newcommand{\C}{\mathbb{C}}
\newcommand{\T}{\mathbb{T}}
\newcommand{\Q}{\mathbb{Q}}
\newcommand{\Z}{\mathbb{Z}}
\newcommand{\N}{\mathbb{N}}
\newtheorem{theorem}{Theorem}[section]
\newtheorem*{theorem*}{Theorem}
\newtheorem{proposition}[theorem]{Proposition}
\newtheorem{lemma}[theorem]{Lemma}
\newtheorem{corollary}[theorem]{Corollary}
\newtheorem*{corollary*}{Corollary}
\newtheorem{question}[theorem]{Question}
\theoremstyle{definition}
\newtheorem*{definition*}{Definition}
\newtheorem{definition}[theorem]{Definition}
\theoremstyle{remark}
\newtheorem*{remark*}{Remark}
\newtheorem{remark}{Remark}
\begin{document}
\author{Anh N. Le}
\address{Department of Mathematics\\
	Ohio State University\\
	231 W. 18th Ave., Columbus, OH 43210}
\email{le.286@osu.edu}

\title{Sublacunary sets and interpolation sets for nilsequences}
\maketitle

\begin{abstract}
    A set $E \subset \mathbb{N}$ is an interpolation set for nilsequences if every bounded function on $E$ can be extended to a nilsequence on $\mathbb{N}$. Following a theorem of Strzelecki, every lacunary set is an interpolation set for nilsequences. We show that sublacunary sets are not interpolation sets for nilsequences. Furthermore, we prove that the union of an interpolation set for nilsequences and a finite set is an interpolation set for nilsequences. Lastly, we provide a new class of interpolation sets for Bohr almost periodic sequences, and as the result, obtain a new example of interpolation set for $2$-step nilsequences which is not an interpolation set for Bohr almost periodic sequences. 
\end{abstract}

\section{Introduction}
A \emph{Bohr almost periodic sequence} is a uniform limit of trigonometric polynomials, i.e. the functions of the form $\psi(n) = \sum_{j=1}^k c_j e^{2 \pi i n \alpha_j}$ where $\alpha_j \in \R$ for $n \in \N$. A subset $E$ of $\N$ is an \emph{$I_0$-set} (or \emph{interpolation set for Bohr almost periodic sequences}) if every bounded function on $E$ can be extended to a Bohr almost periodic sequence. Equivalently, every bounded function on $E$ is the restriction of the Fourier transform of a discrete measure on the torus $\T = \R/\Z$. The class of $I_0$-sets have been extensively studied in harmonic analysis since 1960s \cite{Hartman_1961,Hartman_Ryll-Nardzewski_1964, Ryll-Nardzewski_1964, Graham_Hare_2013}. One notable result is due to Strzelecki \cite{Strzelecki_1963} where he showed that every lacunary set \footnote{$\{r_n\}_{n \in \N} \subset \N$ with $r_1 < r_2 <\ldots$ is \emph{lacunary} if $\inf_{n \in \N} \frac{r_{n+1}}{r_n} > 1$.} is an $I_0$-set. Recently, the author \cite{Le_2019_1} proved that sublacunary sets are not $I_0$-sets. Here $\{r_n\}_{n \in \N} \subset \N$ with $r_1 < r_2 <\ldots$ is \emph{sublacunary} if $\lim_{n \to \infty} (\log r_n)/n = 0$.

Under the harmonic analytic point of view, Bohr almost periodic sequences are the Fourier transforms of discrete measures on $\T$. On the other hand, from the dynamical systems perspective, Bohr almost periodic sequences are the evaluations of continuous functions on the orbits of toral rotations. With this standpoint, Bergelson, Host and Kra \cite{Bergelson_Host_Kra05} introduced a generalization of Bohr almost periodic sequences and named them nilsequences (See \cref{sec:nilsequenes_background} for definition). Under this generalization, Bohr almost periodic sequences coincide with $1$-step nilsequences. Since their introduction, nilsequences have become important objects in ergodic theory, arithmetic combinatorics and number theory. They are indispensable ingredients in the recent development in ergodic Ramsey theory \cite{Leibman15, Frantzikinakis_Host_2017}. They also played prominent roles in the programs of finding linear patterns in primes \cite{Green_Tao10, Green_Tao12a, Green_Tao_Ziegler12}, and more recently in the progress of Chowla and Sarnak's conjectures \cite{Sarnak12, Tao_Teravainen_17, Frantzikinakis_Host_2018}.

%For example, in the same paper \cite{Bergelson_Host_Kra05}, Bergelson, Host and Kra used nilsequences to prove an extension of the classical Khintchine Recurrence Theorem. In his seminal work which gives a new proof of the Szemer\'edi Theorem, Gowers \cite{Gowers01} introduced new norms that capture the structure of functions on cyclic groups. By Green, Tao, and Ziegler's Inverse Theorem \cite{Green_Tao_Ziegler12}, correlation with nilsequences is the only factor that controls these Gowers' norms. Through this result, nilsequences played prominent roles in Green and Tao's program of finding linear patterns in primes \cite{Green_Tao10, Green_Tao12a}. More recently, with the work of Frantzikinakis and Host \cite{Frantzikinakis_Host_2018}, nilsequences also get involved in the recent progress of the Sarnak Conjecture \cite{Sarnak12}.

Roughly speaking, interpolation sets for a class of sequences tell us which coordinates of the sequences that are pairwise independent. Hence, understanding their interpolation sets can shed light on the inherent structures of the family of sequences we are studying. Because of this reason and since nilsequences are important objects in various areas of mathematics, it is of interest to study interpolation sets for nilsequences. Here is the precise definition.
\begin{definition}
    A set $E \subset \N$ is called an \emph{interpolation set for $k$-step nilsequences} (or \emph{$k$-step interpolation set}) if for every bounded function $f: E \to \C$, there exists a $k$-step nilsequence $\psi: \N \to \C$ such that $\psi(n) = f(n)$ for $n \in E$. 
    
    A set $E$ is called an \emph{interpolation set for nilsequences} if $E$ is a $k$-step interpolation set for some $k \in \N$. 
\end{definition}
Since Bohr almost periodic sequences are $1$-step nilsequences, every $I_0$ set is an interpolation set for nilsequences. Therefore, following from Strzelecki's result \cite{Strzelecki_1963}, lacunary set are interpolation sets for nilsequences. In \cite{Le_2019_1}, the author asked whether there is a sublacunary set that is an interpolation set for nilsequences. The main result of this paper give a negative answer to that question.
\begin{theorem}
\label{thm:main}
    Sublacunary sets are not interpolation sets for nilsequences. 
\end{theorem}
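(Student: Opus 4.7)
The plan is to proceed by contradiction. Suppose $E = \{r_n\}_{n \in \N}$ is sublacunary and is a $k$-step interpolation set for some $k \in \N$. The goal is to exhibit a bounded function $f \colon E \to \{-1, 1\}$ admitting no extension to a $k$-step nilsequence, contradicting the assumption. The base case $k = 1$ is exactly the main theorem of \cite{Le_2019_1}, so the novel content is handling higher-step nilsequences where direct Fourier-analytic methods are unavailable.

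The main structural input will be a ``polynomial covering'' estimate for nilsequences. For each nilmanifold $X = G/\Gamma$ of dimension $d$ and each bound $L$ on the Lipschitz norm of the generating function, consider
\[
\mathcal{N}_{X, L} = \{\, n \mapsto F(T^n x) : T \text{ a nilrotation on } X,\ x \in X,\ \|F\|_{\mathrm{Lip}} \leq L\,\}.
\]
I would aim to show that, for each $\epsilon > 0$, the $\epsilon$-covering number of $\mathcal{N}_{X, L}$ in the supremum norm on $[1, N]$ is at most polynomial in $N$, with exponent depending only on $d$, $k$, and $L$. Since the family of all $k$-step nilsequences is the countable union $\bigcup_{X, L} \mathcal{N}_{X, L}$ as $X$ ranges over a countable family of nilmanifolds and $L$ over $\N$, the entire class is thereby covered by countably many polynomially small families.

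On the other side, the hypothesis $(\log r_n)/n \to 0$ forces $|E \cap [1, N]|/\log N \to \infty$, so the number of $\{-1, 1\}$-valued functions on $E \cap [1, N]$ is super-polynomial in $N$. Combining this with the polynomial covering bounds and a standard diagonal construction---enumerating countably many dense subfamilies of the $\mathcal{N}_{X, L}$ and choosing the value of $f$ at successive points $r_n$ to deviate from each candidate family at a distinct coordinate---produces a $\pm 1$ function on $E$ that cannot be interpolated by any $k$-step nilsequence.

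The key obstacle lies in proving the polynomial covering estimate for $\mathcal{N}_{X, L}$. In the $1$-step setting of \cite{Le_2019_1}, the sequences in question are Fourier transforms of discrete measures on $\T$, and the covering estimate reduces to classical facts about trigonometric polynomials. In the nonabelian setting, the relevant orbits $T^n x$ are generalized polynomial maps in Mal'cev coordinates, and controlling the Lipschitz composition $F(T^n x)$ uniformly as $(T, x, F)$ varies requires the quantitative equidistribution machinery of Leibman or Green--Tao on nilmanifolds. An induction on the step $k$, peeling off one layer of the lower central series to reduce to a $(k-1)$-step problem over the horizontal torus, may provide the cleanest route; the delicate point will be maintaining polynomial bounds uniform across all nilmanifolds of a given step.
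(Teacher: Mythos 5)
Your high-level strategy -- a counting argument contrasting the polynomial ``complexity'' of nilsequences against the super-polynomial number of $\pm 1$-valued patterns on $E \cap [1,N]$, followed by a diagonalization over the countably many nilmanifolds -- is the same counting philosophy the paper uses. But the crucial technical lemma in your plan, the polynomial $\epsilon$-covering bound for $\mathcal{N}_{X,L}$ on $[1,N]$, is exactly the place where all the work lives, and you have left it as an assertion with a gesture toward quantitative equidistribution (Leibman, Green--Tao). That machinery is far heavier than what is needed and is not obviously set up to deliver a uniform, explicitly polynomial covering bound in $N$ for a \emph{fixed} nilmanifold.

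The paper takes a different route at precisely this juncture, and it is worth understanding why it is cleaner. Instead of estimating covering numbers of a function class, it works through the separability criterion (Theorem~\ref{thm:criterion_nilinterpolation}): $E$ fails to be a $k$-step interpolation set iff some pair of disjoint subsets $A,B \subset E$ cannot be separated by any $k$-step nilrotation. This replaces ``find a non-interpolatable function'' by ``find a non-separable pair,'' which sidesteps your need to handle uniform limits (a general $k$-step nilsequence is only a uniform limit of basic ones, so a diagonalization that merely ``deviates at one coordinate'' from each $\mathcal{N}_{X,L}$ does not preclude being a limit of members of increasing families -- you need a uniform separation $\delta > 0$, which your sketch does not establish). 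Concretely, the paper fixes a compact $H \subset G$ and $\epsilon > 0$, writes the iterated powers $\underline{x}^{*n}$ as genuine polynomials in the Mal'cev coordinates of $g$ with explicitly bounded degree and coefficient size (Lemma~\ref{lem:nov-20-1}, Corollary~\ref{cor:U_M}), and then shows via B\'ezout's theorem (Lemma~\ref{lem:poly_partition_space}, Proposition~\ref{prop:partition_MM}) that the $N(N-1)/2$ equations $\lVert \varphi(\underline{x}^{*r_i}) - \varphi(\underline{x}^{*r_j}) \rVert_{\T^m} = \epsilon$ cut $[-M,M]^m$ into only $O_M(r_N^{c_3})$ regions; each region can generate at most $2^{(1/\epsilon)^m}$ distinct ``separated'' subsets of $\{r_1,\dots,r_N\}$, so the sublacunary hypothesis $r_N^{c_3} = o(2^N)$ gives the contradiction (Proposition~\ref{prop:not_epsilon_separable}). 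This is a B\'ezout count on the \emph{parameter} $g$, not a metric-entropy count on the \emph{function} space, and it is entirely elementary -- no equidistribution input at all. Your outline would need to be substantially developed, both to actually prove the covering estimate and to repair the diagonalization so it rules out uniform limits, before it could be considered a complete argument.
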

It was shown in \cite{Ryll-Nardzewski_1964} that the class of $I_0$-sets is closed under the unions with finite sets. Here we prove an analogous result regarding interpolation sets for nilsequences.
\begin{theorem}
\label{thm:union_with_finite_set}
    If $E \subset \N$ is a $k$-step interpolation set and $S \subset \N$ is finite, then $E \cup S$ is a $k$-step interpolation set.
\end{theorem}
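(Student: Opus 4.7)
The plan is to reduce to adding a single point by induction on $|S|$ and then to construct the required correcting nilsequence via a Gelfand-spectrum argument.

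An iteration of the single-point case reduces the statement to the following: if $E$ is a $k$-step interpolation set and $s \in \N \setminus E$, then $E \cup \{s\}$ is a $k$-step interpolation set. Fix such $E$ and $s$ and a bounded $f : E \cup \{s\} \to \C$. Using the interpolation property of $E$, extend $f|_E$ to a $k$-step nilsequence $\psi_1$, and set $c := f(s) - \psi_1(s)$. It suffices to produce a $k$-step nilsequence $\phi$ with $\phi|_E = 0$ and $\phi(s) = c$, since then $\psi_1 + \phi$ interpolates $f$. The space $V$ of $k$-step nilsequences vanishing on $E$ is a $\C$-linear subspace, so the image of the evaluation $\phi \mapsto \phi(s)$ on $V$ is a linear subspace of $\C$, hence either $\{0\}$ or all of $\C$; the problem reduces to ruling out the degenerate case.

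Suppose for contradiction that $\phi(s) = 0$ for every $\phi \in V$. Then $\psi(s)$ is determined by $\psi|_E$ for every $k$-step nilsequence $\psi$, so the formula $L(\psi|_E) := \psi(s)$ defines a linear functional $L : \ell^\infty(E) \to \C$ (using that restriction to $E$ is surjective, which is the hypothesis on $E$). The open mapping theorem makes $L$ bounded, and the fact that the $k$-step nilsequences form a pointwise subalgebra of $\ell^\infty(\N)$ makes $L$ multiplicative and unital. Therefore $L$ is a character of the commutative $C^*$-algebra $\ell^\infty(E) \cong C(\beta E)$, so by Gelfand duality $L$ is evaluation at some ultrafilter $\mathcal{U}$ on $E$; that is, $\psi(s) = \lim_{n \to \mathcal{U}} \psi(n)$ for every $k$-step nilsequence $\psi$.

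If $\mathcal{U}$ is the principal ultrafilter at some $n_0 \in E$, this identity gives $\psi(s) = \psi(n_0)$ for every $k$-step nilsequence; applying it to $\psi(n) = e^{2\pi i n \alpha}$ with $\alpha$ irrational is a contradiction since $s \neq n_0$. The main obstacle is the free ultrafilter case, where one must exhibit a $k$-step nilsequence whose value at $s$ differs from its $\mathcal{U}$-limit. The natural approach is to use polynomial-phase nilsequences $e^{2\pi i p(n) \alpha}$ for polynomials $p$ of degree at most $k$: the $1$-step case forces the integer sequence $n - s$ to tend to $0$ in the Bohr topology of $\Z$ along $\mathcal{U}$, and applying the identity with $p(n) = n^2, \ldots, n^k$ imposes increasingly rigid arithmetic constraints on $n - s$; these should combine to force $n = s$ on a $\mathcal{U}$-large set, contradicting $s \notin E$. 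Working out this last step carefully is the delicate technical heart of the proof.
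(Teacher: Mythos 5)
Your reduction to the single-point case, the extension-and-subtraction trick that reduces to finding a nilsequence vanishing on $E$ and nonzero at $s$, and the Gelfand-duality identification of the obstruction with an ultrafilter $\mathcal{U}$ on $E$ satisfying $\psi(s)=\lim_{n\to\mathcal U}\psi(n)$ for every $k$-step nilsequence $\psi$, are all sound. The problem is the final step, which you flag as "the delicate technical heart": it is not merely delicate, it is impossible as stated. Such a free ultrafilter can genuinely exist. Indeed, the $\mathcal U$-limit identity, applied to all sequences $n\mapsto F(g^n x)$, says exactly that $\lim_{n\to\mathcal U} g^{\,n-s}x = x$ for every $k$-step nilsystem $(X,g)$ and every $x\in X$, i.e.\ that $E-s$ is a set of pointwise $k$-step nilrecurrence (witnessed along $\mathcal U$). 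Conversely, if $E-s$ is pointwise $k$-step nilrecurrent, the sets $\{n\in E : d(g^{\,n-s}x,x)<\epsilon\}$ together with the cofinite subsets of $E$ have the finite intersection property (using products of nilsystems and \cref{lem:removing_finite_set}), so a free ultrafilter with your property exists. Your polynomial-phase constraints therefore cannot "force $n=s$ on a $\mathcal U$-large set"; they only force $n-s$ to be nil-Bohr-recurrent along $\mathcal U$, which is entirely compatible with $\mathcal U$ being free. Concretely, replacing $E$ by something like $s+k!\N$ shows these arithmetic constraints have abundant solutions with $n\neq s$.

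What is missing is that you must use the hypothesis that $E$ is a $k$-step interpolation set a second time. The paper's route is: the ultrafilter identity (equivalently, the failure of separability of $E$ and $\{s\}$) shows $E-s$ is a set of pointwise $k$-step nilrecurrence; by \cref{lem:nilrecurrence_partition}, $E-s$ splits into two disjoint sets $A,B$ that are each pointwise $k$-step nilrecurrent; then $A+s$ and $B+s$ are disjoint subsets of $E$ with $g^s 1_X \in \overline{g^{A+s}1_X}\cap\overline{g^{B+s}1_X}$ for every nilsystem, so they are not separable by any $k$-step nilrotation, contradicting (via \cref{thm:criterion_nilinterpolation}) that $E$ is a $k$-step interpolation set. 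Your Gelfand framing is a reasonable alternative packaging of the first half of this argument, and it dualizes nicely to the recurrence statement, but to close the proof you still need the partition lemma (or an equivalent device) to turn the recurrence of $E-s$ into a violation of the interpolation property of $E$ itself. Without that, the contradiction you seek simply isn't there.
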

Known examples of $I_0$-sets include lacunary sets and some suitable unions of lacunary sets, like $\{2^n\}_{n \in \N} \cup \{2^n + 1\}_{n \in \N}$. In the next proposition, we provide a new example of $I_0$-set.
\begin{proposition}
\label{prop:union_of_lacunary_set_with_a_shift}
    Let $\{r_n\}_{n \in \N} \subset \N$ be lacunary and let $\{t_n\}_{n \in \N} \subset \N$ be such that $\lim_{n \to \infty} t_n/r_n = 0$. Then $E = \{r_n\}_{n \in \N} \cup \{r_n + t_n\}_{n \in \N}$ is an $I_0$-set if and only if $\{t_n\}_{n \in \N}$ is not a set of Bohr recurrence.\footnote{A set $R \subset \N$ is a \emph{set of Bohr recurrence} if and only if for any finite dimensional torus $\T^d$ and any $\alpha \in \T^d$, we have $0$ is in the closure of $R \alpha = \{n \alpha: n \in R\}$. For example, the set of even natural numbers is a set of Bohr recurrence, but the set of odd natural numbers is not.} 
\end{proposition}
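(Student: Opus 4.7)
The plan is to prove the two directions of the equivalence separately.

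For the direction ``$\{t_n\}_{n \in \N}$ is a set of Bohr recurrence implies $E$ is not an $I_0$-set'', I would argue by contradiction. Since $t_n = o(r_n)$ and $\{r_n\}$ is lacunary, the sets $\{r_n\}$ and $\{r_n + t_n\}$ are disjoint on a cofinite tail of $n$, so I may define a bounded $f \colon E \to \{0, 1\}$ by $f(r_n) = 0$ and $f(r_n + t_n) = 1$ on that tail. Assuming $E$ is $I_0$, let $\psi$ be a Bohr almost periodic extension of $f$. Every Bohr almost periodic sequence extends uniquely to a continuous function on the Bohr compactification $b\Z$, and $\{t_n\}$ being a set of Bohr recurrence is equivalent to $0$ lying in the closure of $\{t_n\}$ inside $b\Z$. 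By uniform continuity of the lift of $\psi$ to the compact group $b\Z$, there exists $N$ with $\sup_{m \in \N} |\psi(m + t_N) - \psi(m)| < 1/2$; evaluating at $m = r_N$ yields $|1 - 0| < 1/2$, a contradiction.

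For the converse direction, I would fix $\alpha \in \T^d$ and $\epsilon_0 > 0$ with $\|t_n \alpha\|_{\T^d} \geq \epsilon_0$ for every $n$. Both $\{r_n\}$ and $\{r_n + t_n\}$ are lacunary (the latter because $t_n = o(r_n)$), hence are $I_0$-sets by Strzelecki's theorem. In this situation the union $E$ is $I_0$ if and only if there exists in addition a Bohr almost periodic ``cutoff'' function $g \colon \N \to \C$ with $g(r_n) = 1$ and $g(r_n + t_n) = 0$ for every $n$: given such a $g$, any bounded $f \colon E \to \C$ splits as $f = f_1 + f_2$ on the two pieces, extends via Strzelecki to Bohr almost periodic sequences $\psi_1, \psi_2$, and recombines as $\psi := g\psi_1 + (1-g)\psi_2$, a Bohr almost periodic interpolation of $f$ on $E$. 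By Urysohn's lemma on the compact Hausdorff space $b\Z$, the existence of $g$ reduces to the disjointness of the closures of $\{r_n\}$ and $\{r_n + t_n\}$ in $b\Z$.

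The principal step is therefore to prove that these two closures are disjoint. The character $\chi_\alpha(n) := e^{2\pi i n \alpha}$ already forbids ``diagonal'' coincidences: along a common subsequence $n_k$, one would have $\chi_\alpha(t_{n_k}) \to 1$, contradicting $\|t_n \alpha\|_{\T^d} \geq \epsilon_0$. The \emph{main obstacle} is the ``off-diagonal'' case, where distinct subsequences $r_{n_k}$ and $r_{m_k} + t_{m_k}$ converge to a common point of $b\Z$. Handling this requires more than the single separating character $\chi_\alpha$: I expect one can combine it with a finite family of auxiliary characters, chosen using the lacunarity of $\{r_n\}$ together with the control $t_n = o(r_n)$, so that their joint values on $b\Z$ separate the two closed sets. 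Making this combinatorial step precise is the hardest part of the argument.
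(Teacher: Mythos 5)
Your argument for the direction ``\,$\{t_n\}$ a set of Bohr recurrence $\Rightarrow$ $E$ not $I_0$\,'' is correct, and is the same idea as the paper's phrased through the Bohr compactification; the paper instead notes directly that for every $\alpha$ some subsequence of $t_n\alpha$ tends to $0$, so $\overline{\{r_n\}\alpha}\cap\overline{\{(r_n+t_n)\}\alpha}\neq\varnothing$, i.e.\ the two halves of $E$ are never separable by a Bohr rotation, and then invokes \cref{thm:criterion_bohr_interpolation}.

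The converse direction has a genuine gap, which you flag yourself. Your reduction to a Bohr almost periodic cutoff $g$, and hence to the disjointness of $\overline{\{r_n\}}$ and $\overline{\{r_n+t_n\}}$ in $b\Z$, is valid. The problem is that this reduction does not weaken the statement: both $\{r_n\}$ and $\{r_n+t_n\}$ are already lacunary, hence $I_0$, so by \cref{thm:criterion_bohr_interpolation} the disjointness of these two closures in $b\Z$ is \emph{equivalent} to the assertion that $E$ is $I_0$. You have restated the proposition, and the ``off-diagonal'' case you defer---a subsequence $r_{n_k}$ colliding in $b\Z$ with $r_{m_j}+t_{m_j}$ for $n_k\neq m_j$---is precisely its content. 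The character $\chi_\alpha$ controls only the within-pair differences $t_n$ and gives no information about such cross-pair collisions, and the vague appeal to ``a finite family of auxiliary characters'' is where a real argument would have to go but does not.

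The paper sidesteps this entirely by never trying to separate $\{r_n\}$ from $\{r_n+t_n\}$ as a whole. Having fixed $\alpha\in\T^d$ and $\epsilon>0$ with $\lVert t_n\alpha\rVert>\epsilon$ for all $n$, it covers $\T^d$ by finitely many balls $B_1,\dots,B_\ell$ of diameter $\epsilon/2$ and greedily partitions $E$ into $2\ell$ pieces $F_1,\dots,F_\ell,F_1',\dots,F_\ell'$ so that (i) each pair $\{r_n,r_n+t_n\}$ contributes exactly one element to some $F_i$ (the one whose image under $\alpha$ lies in $B_i$) and the other to $F_i'$, and (ii) $F_i\alpha\subset B_i$ while $F_i'\alpha$ lies outside the concentric ball of twice the diameter. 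Property (i) forces every union of two pieces other than $F_i\cup F_i'$ to meet each pair $\{r_n,r_n+t_n\}$ at most once, hence to be lacunary and therefore $I_0$, so those two pieces are Bohr-separable; property (ii) separates $F_i$ from $F_i'$ via $\alpha$ itself. Thus all $2\ell$ pieces are $I_0$ and pairwise separable, and $E$ is $I_0$. The off-diagonal collisions you worry about never need to be analysed, because in this decomposition they always fall inside a single lacunary piece.
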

From \cref{prop:union_of_lacunary_set_with_a_shift}, we have following examples of $I_0$-sets: $\{2^n\}_{n \in \N} \cup \{2^n + 3n + 1\}_{n \in \N}, \{3^n\}_{n \in \N} \cup \{3^n +n^2 + 2\}_{n \in \N}$, and $\{n!\}_{n \in \N} \cup \{n! + 2n - 1\}_{n \in \N}$. The class of Bohr almost periodic sequences is strictly contained in the class of $2$-step nilsequences. In \cite{Le_2019_1} we provided an example of $2$-step interpolation set that is not an $I_0$-set. As a corollary of \cref{prop:union_of_lacunary_set_with_a_shift}, we obtain another example for set of this type.
\begin{corollary}
\label{cor:2-step-1-step}
    Let $\{r_n\}_{n \in \N} \subset \N$ be lacunary. Let $\{t_n\}_{n \in \N} \subset \N$ be such that $\lim_{n \to \infty} t_n/r_n = 0$ and $\{t_n\}_{n \in \N}$ is a set of Bohr recurrence. Then $\{r_n\}_{n \in \N} \cup \{r_n + t_n\}_{n \in \N}$ is a $2$-step interpolation set but not an $I_0$-set. 
\end{corollary}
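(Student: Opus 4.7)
The "not an $I_0$-set" direction is immediate from Proposition~\ref{prop:union_of_lacunary_set_with_a_shift}: the hypothesis that $\{t_n\}_{n\in\N}$ is a set of Bohr recurrence forces $E=\{r_n\}\cup\{r_n+t_n\}$ to fail the $I_0$ condition.

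The substantive half is the $2$-step interpolation claim. Given an arbitrary bounded $f:E\to\C$, I plan a two-stage construction. Since $\{r_n\}$ is lacunary, Strzelecki's theorem produces a Bohr almost periodic sequence $\psi_1$ with $\psi_1(r_n)=f(r_n)$ for every $n$; in particular $\psi_1$ is a $2$-step nilsequence. Setting $g(n):=f(r_n+t_n)-\psi_1(r_n+t_n)$, which is bounded, the problem reduces to producing a $2$-step nilsequence $\psi_2$ that satisfies $\psi_2(r_n)=0$ and $\psi_2(r_n+t_n)=g(n)$ for all $n$, since then $\psi:=\psi_1+\psi_2$ is the desired extension.

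To build $\psi_2$ I would work with $2$-step generalized polynomials of the form $k\mapsto e^{2\pi i(\alpha k^2+\beta k)}$, whose uniform limits are $2$-step nilsequences. For such a building block the ratio $\psi(r_n+t_n)/\psi(r_n)$ equals $e^{2\pi i(\alpha(2r_nt_n+t_n^2)+\beta t_n)}$, so after imposing the vanishing constraint $\psi_2(r_n)=0$ (handled by pairing each quadratic exponential with a compensating $1$-step term, using Strzelecki on the lacunary set $\{r_n\}$) it suffices to realise any bounded sequence on $\N$ as a uniformly convergent sum of the form $\sum_j c_j e^{2\pi i(\alpha_j(2r_nt_n+t_n^2)+\beta_j t_n)}$.

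The main obstacle is the underlying two-dimensional Strzelecki-type assertion: that the $\Z^2$-valued sequence $(2r_nt_n+t_n^2,\ t_n)$ is an $I_0$-set in $\T^2$. The two ingredients that should make this work are the lacunarity of $\{r_n\}$, which ensures that $r_nt_n$ grows quickly enough to dominate the first coordinate, and the hypothesis $t_n/r_n\to 0$, which keeps $\{r_n+t_n\}$ lacunary and separates the two branches of $E$ at the $2$-step level even though they collapse at the $1$-step level (this latter collapse being exactly what the Bohr recurrence of $\{t_n\}$ reflects, and why the $I_0$ property fails). Verifying this $\T^2$-interpolation statement is where the real work lies; combined with Strzelecki on $\{r_n\}$ to enforce the vanishing condition, it produces the desired $\psi_2$ and completes the corollary.
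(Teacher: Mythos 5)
Your argument for the ``not an $I_0$-set'' half is correct and identical to the paper's.

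For the $2$-step interpolation half, however, the proposal has a genuine gap, and the structure of the planned argument appears to break down exactly where the hypotheses bite. You want a $2$-step nilsequence $\psi_2$ with $\psi_2(r_n)=0$ and $\psi_2(r_n+t_n)=g(n)$, and you plan to enforce the vanishing constraint by pairing each quadratic exponential $\phi(k)=e^{2\pi i(\alpha k^2+\beta k)}$ with a $1$-step nilsequence $\eta$, obtained via Strzelecki on $\{r_n\}$, satisfying $\eta(r_n)=\phi(r_n)$. But $\{r_n\}$ being $I_0$ only lets you prescribe $\eta$ on $\{r_n\}$; you have no control whatsoever over $\eta(r_n+t_n)$, and indeed you \emph{cannot} control it, because $\{r_n\}\cup\{r_n+t_n\}$ is not $I_0$ (this is exactly the other half of the corollary). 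So the ``compensated'' building block $\phi-\eta$ vanishes on $\{r_n\}$ but takes uncontrolled values on $\{r_n+t_n\}$, and the reduction to ``realise any bounded sequence as $\sum_j c_j e^{2\pi i(\alpha_j(2r_nt_n+t_n^2)+\beta_j t_n)}$'' does not follow. You also flag the claim that $(2r_nt_n+t_n^2,\,t_n)$ is an $I_0$-set in $\T^2$ as the ``main obstacle'' and leave it unverified; it is neither clear that this statement is true as formulated nor that it would suffice for the construction, so the substantive half of the corollary remains unproved.

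The paper takes a different and much shorter route that avoids the entanglement between the two branches entirely. It first observes that if $(\psi(n))_n$ is a $1$-step nilsequence then $(\psi(n^2))_n$ is a $2$-step nilsequence, so $\{s_n\}$ is a $2$-step interpolation set whenever $\{s_n^2\}$ is $I_0$. It then applies \cref{prop:union_of_lacunary_set_with_a_shift} to $\{r_n^2\}\cup\{r_n^2+(2r_nt_n+t_n^2)\}$: since $\{r_n\}$ is lacunary and $t_n/r_n\to 0$ (with $\{t_n\}$ increasing), the shift sequence $\{2r_nt_n+t_n^2\}$ is itself lacunary, hence not a set of Bohr recurrence, and $(2r_nt_n+t_n^2)/r_n^2\to 0$. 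So the squared set is $I_0$ and the original set is a $2$-step interpolation set. This squaring trick buys two things your plan does not: it turns the non-separable pair $\{r_n\}$, $\{r_n+t_n\}$ into a separable pair of squares, and it lets the already-proved \cref{prop:union_of_lacunary_set_with_a_shift} do all the heavy lifting rather than requiring a new $\T^2$ Strzelecki-type theorem.
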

Using \cref{prop:union_of_lacunary_set_with_a_shift}, we also derive a different proof of Corollary 2.7.9 in \cite{Graham_Hare_2013} which says that the sum of a lacunary set and a finite set is an $I_0$-set (\cref{cor:lac_union_shift}). 

\section{Background}
%\subsection{Almost periodic sequences}

\subsection{Nilsequences}
\label{sec:nilsequenes_background}
    Let $G$ be a $k$-step nilpotent Lie group and $\Gamma \subset G$ be a discrete, cocompact subgroup. The homogeneous space $X = G/\Gamma$ is called a \emph{$k$-step nilmanifold}. For any $g \in G$, the map $g: X \to X$ defined by $x \mapsto gx$ is a diffeomorphism and the pair $(X, g)$ is called a \emph{$k$-step nilsystem}.
    
    For $x \in X$ and a continuous function $F$ on $X$, the sequence $(F(g^n x))_{n \in \N}$ is called a \emph{basic $k$-step nilsequence}. Examples of basic $k$-step nilsequences include $(e^{2 \pi i n^k \alpha})_{n \in \N}$ where $\alpha \in \R$, or more generally, $(e^{2 \pi i P(n)})_{n \in \N}$ where $P \in \R[x]$ of degree not greater than $k$. 
    
    A sequence $\psi: \N \to \C$ is a \emph{$k$-step nilsequence} if for every $\epsilon > 0$, there exists a basic $k$-step nilsequence $\psi_{\epsilon}: \N \to \C$ such that $|\psi(n) - \psi_{\epsilon}(n)| < \epsilon$ for all $n \in \N$. In other words, a $k$-step nilsequence is a uniform limit of basic $k$-step nilsequences.
    
    The space of $k$-step nilsequences is closed under pointwise addition, multiplication, complex conjugation and uniform limits. 
    
    More details on nilsequences can be found in \cite[Section 4.3.1]{Bergelson_Host_Kra05} and \cite[Section 11.3.2]{host_kra_18}.
    
    \subsection{Special representations of nilsequences}
    We can always embed a $k$-step nilmanifold $X = G/\Gamma$ into a $k$-step nilmanifold $X' = G'/\Gamma'$ where $G'$ is connected and simply connected and every element of $G$ is represented in $G'$ (for example, see \cite[Corollary 26, Section 10.5.1]{host_kra_18}). Therefore, every $k$-step nilsequence can be realized from a $k$-step nilmanifold with a connected and simply connected Lie group.

    \begin{comment}
\begin{corollary}
    Let $(F(g^n x))_{n \in \N}$ be a nilsequence arising from a $k$-step nilmanifold $X = G/\Gamma$. Then there exist a $k$-step nilmanifold $X' = G'/\Gamma'$, a function $F' \in C(X')$, $g' \in G'$ and $x' \in X'$ where $G'$ is a connected and simply connected $k$-step nilpotent Lie group such that $F'(g'^n x') = F(g^n x)$ for all $n \in \N$. In other word, every nilsequence can be realized from some nilmanifold with connected and simply connected Lie group.
\end{corollary}

\begin{lemma}[see {\cite[Corollary 26, Section 10.5.1]{host_kra_18}}]
\label{lem:dec-6-1}
    Every $k$-step nilmanifold $X = G/\Gamma$ can be embedded as a normal subnilmanifold of a $k$-step nilmanifold $X' = G'/\Gamma'$ where $G'$ is connected and simply connected and every element of $G$ is represented in $G'$.
\end{lemma}

\begin{remark*}
    The last statement means that for every $g \in G$, there exists $g' \in G'$ such that $g \cdot x = g' \cdot x$ for every $x \in X$, where $g \cdot x$ represents the image of $x \in X = G/\Gamma$ under the action of $g \in G$, and $g' \cdot x$ represents the image of $x \in X' = G'/\Gamma'$ under the action of $G'$.
    
    In the original statement, the step of nilpotency is not specified, but this fact is obvious in the proof.
\end{remark*}
\end{comment}

\subsection{Characterization of $I_0$-sets}
    For $A \subset \N$ and $\alpha \in \T^d = \R^d/\Z^d$, let $\overline{A \alpha}$ denote the closure of $A \alpha := \{a \alpha: a \in A\}$ in $\T^d$.

    Two sets $A, B \subset \N$ are said to be \emph{separable by a Bohr rotation} if there exists a finite dimensional torus $\T^d$ and an element $\alpha \in \T^d$ such that $\overline{A \alpha} \cap \overline{B \alpha} = \varnothing$.
\begin{theorem}[Hartman-Ryll-Nardzewski \cite{Hartman_Ryll-Nardzewski_1964}]
    \label{thm:criterion_bohr_interpolation}
    A set $E \subset \N$ is an $I_0$-set if and only if every two disjoint subsets of $E$ are separable by a Bohr rotation.
\end{theorem}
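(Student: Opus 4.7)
The plan is to prove both directions.

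For the forward direction, given disjoint $A, B \subset E$, I would apply the $I_0$ hypothesis to the function $f = \mathbf{1}_A - \mathbf{1}_B$ (extended by $0$ on $E \setminus (A \cup B)$) to obtain a Bohr almost periodic extension $\psi$. By the definition of Bohr almost periodicity, $\psi$ is uniformly approximated within $1/3$ by a trigonometric polynomial of the form $\phi(n) = F(n\alpha)$ for some $F \in C(\T^d)$ and some $\alpha \in \T^d$. Then $F$ sends $A\alpha$ into the closed disk of radius $1/3$ around $1$ and $B\alpha$ into the closed disk of radius $1/3$ around $-1$; by continuity of $F$ the same inclusions hold for the closures, and since the two target disks are disjoint, $\overline{A\alpha} \cap \overline{B\alpha} = \varnothing$.

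For the reverse direction, the plan is to turn the hypothesis into a quantitative approximation lemma and then iterate. The lemma I would prove is: for any pairwise disjoint $A_1, \dots, A_N \subset E$, there exist $\alpha \in \T^D$ (obtained by concatenating the pairwise separating rotations supplied by the hypothesis) and continuous $F_1, \dots, F_N : \T^D \to [0,1]$ with $F_j \equiv 1$ on $\overline{A_j \alpha}$, $F_j \equiv 0$ on $\bigcup_{k \ne j} \overline{A_k \alpha}$, and $\sum_j F_j \le 1$ pointwise on $\T^D$. The first two properties follow from Urysohn applied to the disjoint compact sets $\overline{A_j\alpha}$, and the third is enforced by replacing each $F_j$ with $F_j / \max(1, \sum_k F_k)$. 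Stone-Weierstrass then writes each $F_j$ as a uniform limit of trigonometric polynomials, so $\psi_j(n) := F_j(n\alpha)$ is a Bohr almost periodic sequence bounded by $1$ on all of $\N$.

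Using this lemma, given bounded $f : E \to \C$ with $\|f\|_\infty \le M$ and $\epsilon > 0$, I would partition the range of $f$ into finitely many pieces $I_j$ of diameter less than $\epsilon$, set $A_j = f^{-1}(I_j) \cap E$, pick representatives $c_j \in I_j$, and form $\psi := \sum_j c_j \psi_j$. This $\psi$ is Bohr almost periodic, approximates $f$ within $\epsilon$ on $E$, and crucially satisfies $\|\psi\|_\infty \le M$ on all of $\N$ because $\sum_j F_j \le 1$. This bound lets me iterate: put $f^{(1)} = f$, at stage $k$ produce $\psi^{(k)}$ with $|\psi^{(k)} - f^{(k)}| \le 2^{-k}M$ on $E$ and $\|\psi^{(k)}\|_\infty \le 2^{-k+1}M$ on $\N$, and set $f^{(k+1)} = f^{(k)} - \psi^{(k)}$. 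The series $\sum_k \psi^{(k)}$ then converges uniformly on $\N$ by geometric decay, its limit is Bohr almost periodic (closure under uniform limits), and agrees with $f$ on $E$.

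The main obstacle is precisely the uniform-on-$\N$ bound $\|\psi\|_\infty \le M$ in the approximation lemma: without it the summands in the iteration could blow up on $\N \setminus E$ and the scheme would not close. The normalization $F_j \mapsto F_j/\max(1, \sum_k F_k)$ is the trick that turns the hypothesis, which is intrinsically an $E$-local statement, into an estimate valid on all of $\N$. The remaining ingredients (Stone-Weierstrass on tori, closure of Bohr almost periodic sequences under uniform limits, and concatenation of finitely many rotations into a single rotation on a product torus) are standard.
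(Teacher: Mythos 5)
The paper does not prove this theorem; it is cited from Hartman--Ryll-Nardzewski, so there is no in-paper argument to compare against. Your proof is correct and self-contained. The forward direction is routine, and the reverse direction correctly identifies the real issue: the separation hypothesis gives an approximation only on $E$, and the iteration scheme requires a bound on all of $\N$ to converge. Your normalization $F_j \mapsto F_j/\max\bigl(1, \sum_k F_k\bigr)$ preserves the interpolation properties ($F_j\equiv 1$ on $\overline{A_j\alpha}$, $F_j\equiv 0$ on the other $\overline{A_k\alpha}$, since on each such set exactly one $G_k$ equals $1$ and the rest vanish) while enforcing $\sum_j F_j\le 1$, which yields $\lVert\psi\rVert_\infty\le\max_j|c_j|\le M$ globally. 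Two small points worth stating explicitly if you write this up: (i) the concatenated $\alpha\in\T^D$ separates $\overline{A_j\alpha}$ from $\overline{A_k\alpha}$ because the closure of a product-orbit projects into the closure of each coordinate orbit, so disjointness on one factor forces disjointness upstairs; and (ii) empty $A_j$'s are harmless (take $F_j\equiv 0$). This is essentially the classical argument (close in spirit to the treatment in Graham--Hare), and the iteration with geometric decay and closure of Bohr almost periodic sequences under uniform limits closes the proof as you say.
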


Strzelecki \cite{Strzelecki_1963} proved that all lacunary sets are $I_0$-sets.
By Hartman-Ryll-Nardzewski characterization \cite{Hartman_Ryll-Nardzewski_1964}, some unions of lacunary sets are $I_0$-sets, for example, $\{2^n\}_{n \in \N} \cup \{2^n + 1\}_{n \in \N}$. In fact, it is shown in \cite{Graham_Hare_2013} that any finite union of shifts of lacunary sets is an $I_0$-set, for instance, $\bigcup_{i=1}^{\ell} \{2^n + i\}_{n \in \N}$.  Grow \cite{Grow_1987} constructed a class of $I_0$-sets which are not finite unions of lacunary sets, for example, $\{3^{n^2} + 3^j: n \geq 1, (n-1)^2 \leq j \leq n^2\}$ (see also M\'ela \cite{Mela_1969}). More information on $I_0$-sets can be found in the book \cite{Graham_Hare_2013} by Graham and Hare.

\subsection{Characterization of interpolation sets for nilsequences}
For $A \subset \N$, for a $k$-step nilsystem $(X = G/\Gamma, g)$ and $x \in X$, let $g^A x$ denote the set $\{g^a x: a \in A\}$ and let $\overline{g^A x}$ denote the closure of $g^A x$ in $X$. 
 \begin{definition*}
        Two sets $A, B \subset \N$ are said to be \emph{separable by a $k$-step nilrotation} if there exists a $k$-step nilsystem $(X = G/\Gamma, g)$ and $x \in X$ such that $\overline{g^A x} \cap \overline{g^B x} = \varnothing$.
    \end{definition*}
Analogous to Hartman-Ryll-Nardzewski's criterion \cite{Hartman_Ryll-Nardzewski_1964}, we have a characterization of $k$-step interpolation sets and its proof is identical to  \cite[Theorem 3.1]{Le_2019_1}.
\begin{theorem}
\label{thm:criterion_nilinterpolation}
    The set $E \subset \N$ is a $k$-step interpolation set if and only if every two disjoint subsets of $E$ are separable by a $k$-step nilrotation.
\end{theorem}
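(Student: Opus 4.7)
The strategy is to mimic the Hartman--Ryll-Nardzewski proof, replacing Bohr rotations on tori with nilrotations on $k$-step nilmanifolds, the crucial point being that the class of $k$-step nilsystems is closed under finite products. For necessity, let $E$ be a $k$-step interpolation set and fix disjoint $A, B \subset E$. Extend the function that equals $1$ on $A$ and $0$ on $B$ (and is arbitrary elsewhere on $E$) to a $k$-step nilsequence $\psi$, and pick a basic $k$-step nilsequence $n \mapsto F(g^n x)$ with $\sup_n |\psi(n) - F(g^n x)| < 1/3$. Then $|F - 1| \le 1/3$ on $g^A x$ and $|F| \le 1/3$ on $g^B x$; by continuity these inequalities persist on the closures, so $\overline{g^A x} \cap \overline{g^B x} = \varnothing$.

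For sufficiency, the plan is a two-step approximation argument. In Step 1 I would show that every $f \colon E \to \C$ with $\|f\|_\infty \le 1$ can be approximated on $E$ to within $1/2$ by a $k$-step nilsequence. Choose a finite partition $E = A_1 \sqcup \cdots \sqcup A_m$ and constants $c_1, \ldots, c_m \in \C$ so that $|f - c_j| \le 1/4$ on $A_j$. By hypothesis, for each pair $i \ne j$ there is a $k$-step nilsystem $(X_{ij}, g_{ij})$ with base point $x_{ij}$ separating $A_i$ from $A_j$. Form the product nilsystem $(X, g) = \prod_{i \ne j}(X_{ij}, g_{ij})$ with base point $x = (x_{ij})$; this is still a $k$-step nilsystem, and the closures $\overline{g^{A_1} x}, \ldots, \overline{g^{A_m} x}$ in $X$ are pairwise disjoint since disjointness is already witnessed after projection onto the appropriate factor. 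Urysohn's lemma then produces a continuous $F \colon X \to \C$ with $|F - c_j| \le 1/4$ on a neighborhood of $\overline{g^{A_j} x}$, so $n \mapsto F(g^n x)$ is a basic $k$-step nilsequence that approximates $f$ on $E$ within $1/2$.

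Step 2 is a standard iteration: applying Step 1 to $f$ gives $\psi_1$ with $\|f - \psi_1\|_{E} \le 1/2$; applying Step 1 to the rescaled remainder $2(f - \psi_1|_E)$ and halving the output gives $\psi_2$ with $\|f - \psi_1 - \psi_2\|_{E} \le 1/4$; iterating produces $k$-step nilsequences $\psi_j$ with $\|\psi_j\|_\infty \le 2^{-j+1}$, so that $\sum_j \psi_j$ converges uniformly on $\N$ to a $k$-step nilsequence (by closure of the class under uniform limits) which agrees with $f$ on $E$. The main obstacle I anticipate is Step 1, specifically the passage from pairwise to simultaneous separation of the $A_j$; this hinges on the product-of-nilsystems construction preserving the nilpotency step, which is precisely the feature of the nilpotent setting that lets the Hartman--Ryll-Nardzewski strategy go through without change.
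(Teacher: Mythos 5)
Your proposal is correct and follows the standard Hartman--Ryll-Nardzewski two-step approximation argument, which is precisely the approach the paper invokes by citing \cite{Le_2019_1} (it states the proof is identical to Theorem~3.1 there). The product-of-nilsystems step you flag as the crux is exactly what makes the classical argument carry over, and the remaining details---Urysohn's lemma on the pairwise-disjoint orbit closures in the compact product nilmanifold, and the geometric iteration combined with closure of $k$-step nilsequences under uniform limits---are handled correctly.
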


\subsection{Sets of pointwise nilrecurrence}

\begin{definition}
\label{def:nilrecurrence}
A set $R \subset \N$ is called a \emph{set of pointwise $k$-step nilrecurrence} if for any $k$-step nilsystem $(X = G/\Gamma, g)$ and for any $x \in X$, we have $x \in \overline{g^R x}$.
\end{definition}

Let $\pi: G \to X$ be the natural quotient map $\pi(g) = g \Gamma$. Let $1_G$ be the identity element of $G$ and $1_X = \pi(1_G)$. In \cref{def:nilrecurrence}, by changing the base point to $1_X$, we have an equivalent definition: $R \subset \N$ is a set of pointwise $k$-step nilrecurrence if and only if for any $k$-step nilsystem $(X, g)$, we have $1_X \in \overline{g^R 1_X}$.

For any $k \in \N$, the set $k \N$ is a set of pointwise $k$-step nilrecurrence. Other than that, little is known about sets of pointwise nilrecurrence. However, there is a related and better-studied notion called sets of topological $k$-step nilrecurrence. A set $R \subset \N$ is called a \emph{set of topological $k$-step nilrecurrence} if for any $k$-step nilsystem $(X, g)$ and for any $U \subset X$ open, there exists $r \in R$ such that $U \cap g^{-r} U \neq \varnothing$.\footnote{Usually when defining sets of topological recurrence, we restrict to minimal systems. However, as every orbit closure in nilsystems is minimal, we do not need this restriction here.} It is proved in \cite{Host_Kra_Maass_2016} that a set $R$ is a set of topological $k$-step nilrecurrence if and only if it is a set of Bohr recurrence.  

It is obvious that every set of pointwise $k$-step nilrecurrence is a set of topological $k$-step nilrecurrence. But the converse is probably false (See \cite{Pavlov_2008}.) For example, while $\{n^2\}_{n \in \N}$ is a set of topological nilrecurrence, it is not clear whether it is a set of pointwise $2$-step nilrecurrence or not.

%\begin{remark}
%To prove \cref{lem:nilrecurrence_partition}, one can attempt to use \cite[Theorem 4.1]{Host_Kra_Maass_2016} which says that a set $R \subset \N$ is of topological $k$-step nilrecurrence if and only if it is a set of Bohr recurrence. From there we can invoke the main result in \cite{Ramsey_1980} saying that a set of Bohr recurrence can be partitioned into two sets of Bohr recurrence. However, the result in \cite{Host_Kra_Maass_2016} involves sets of topological recurrence while our objects here are sets of pointwise recurrence. The latter is a strictly stronger notion as Roni??? Pavlov \cite{????} showed that there are sets of pointwise recurrence but not topological recurrence. 
%\end{remark}

\subsection{Nilmanifolds and Mal'cev bases}
\label{sec:malcev_basis}
Let $X = G/\Gamma$ be a $k$-step nilmanifold with connected and simply connected $G$. If $m$ is the dimension of $G$ and $\mathcal{G}$ is the Lie algebra of $G$, then $\mathcal{G}$ admits a base $(\xi_1, \ldots, \xi_m)$ satisfying the following properties:
\begin{enumerate}
    \item The map $\psi: \R^m \to G$ defined by
    \[
        \psi(t_1, \ldots, t_m) = \exp(t_1 \xi_1) \cdots \exp(t_m \xi_m)
    \]
    is a diffeomorphism from $\R^m$ onto $G$.
    
    \item $\psi(\Z^m) = \Gamma$.
\end{enumerate}
The base $(\xi_1, \ldots, \xi_m)$ is called a \emph{Mal'cev basis} of $G$. If $g = \psi(t_1, \ldots, t_m)$, then $t_1, \ldots, t_m$ are called the \emph{Mal'cev coordinates} of $g$ in the base $(\xi_1, \ldots, \xi_m)$. 

For $\underline{s} = (s_1, \ldots, s_m)$ and $\underline{t} = (t_1, \ldots, t_m) \in \R^m$, write
\[
    \underline{s} * \underline{t} = \psi^{-1} (\psi(\underline{s}) \psi(\underline{t}))
\]
There exist $m-1$ polynomials $P_1, \ldots, P_{m-1}$ with $P_i \in \Q[s_1, \ldots, s_i, t_1, \ldots, t_i]$ of degree at most $k-1$ such that for all $\underline{s}, \underline{t} \in \R^m$,
\[
    \underline{s} * \underline{t} = (s_1 + t_1, s_2 + t_2 + P_1(s_1, t_1), \ldots, s_m + t_m + P_{m-1}(s_1, \ldots, s_{m-1}, t_1, \ldots, t_{m-1})))
\]
We have $\psi([0,1)^m)$ is a fundamental domain for $X$. Hence, we can identify $X$ with $[0,1)^m$ through the diffeomorphism $\widetilde{\psi}: X \to [0,1)^m$. 

We then have a chain of maps:
\[
    \R^m \xrightarrow{\psi} G \xrightarrow{\pi} X \xrightarrow{\tilde{\psi}} [0,1)^m,
\]
and define $\varphi: \R^m \to [0,1)^m$ to be $\varphi = \tilde{\psi} \circ \pi \circ \psi$. Equivalently, for $\underline{x} \in \R^m$, there exists a unique $\underline{z}(x) \in \Z^m$ such that $\underline{x} * \underline{z}(x) \in [0, 1)^m$. Then we can see that $\varphi(\underline{x}) = \underline{x} * \underline{z}(x)$. Further details on Mal'cev bases can be found in \cite{Malcev_1949, Corwin_Greenleaf90} and \cite[Chapter 10]{host_kra_18}.

\begin{definition}
For a real number $x$, let $\lVert x \rVert$ denote the distance from $x$ to the nearest integer. For $\underline{x} = (x_1, \ldots, x_m)$ and $\underline{y} = (y_1, \ldots, y_m) \in \R^m$, define 
\[
    \lVert \underline{x} - \underline{y} \rVert_{T^m} = \max_{1 \leq i \leq m} \lVert x_i - y_i \rVert.
\]
\end{definition}
We can see that $\lVert \cdot \rVert_{\T^m}$ defines a metric on $[0,1)^m$ which is compatible with the Euclidean metric. The reason we choose this metric only to make our computations later less cumbersome. We use $d$ to denote the metric on $X$ that comes from the metric $\lVert \cdot \rVert_{\T^m}$ on $[0,1)^m$ through the map $\widetilde{\psi}^{-1}$.

\subsection{Notation}
    We use $\N$ to denote the set of positive integers $\{1, 2, 3, \ldots\}$ and use $\T$ to denote the torus $\R/\Z$. The notation $(b(n))_{n \in \N}$ represents a sequence of complex numbers and $\{r_n\}_{n \in \N}$ represents a subset of $\N$ with $r_1 < r_2 < \ldots$.
   
    Let $f, g: \R \to \R$. By writing $f(x) = O_{a, b}(g(x))$, we mean there exist positive constants $C$ and $N_0$ that depend on $a$ and $b$ such that $|f(x)| < C g(x)$ for every $x > N_0$.  

\section{Union of interpolation sets with finite sets}

\begin{lemma}
\label{lem:removing_finite_set}
    Let $R \subset \N$ be a set of pointwise $k$-step nilrecurrence and $A \subset R$ is a finite set. Then $R \setminus A$ is still a set of pointwise $k$-step nilrecurrence.
\end{lemma}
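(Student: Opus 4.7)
The plan is to fix an arbitrary $k$-step nilsystem $(X = G/\Gamma, g)$, a point $x \in X$, and an open neighborhood $U$ of $x$, and to produce some $n \in R \setminus A$ with $g^n x \in U$. The argument splits cleanly once we partition $A$ according to which of its elements fix $x$, namely
\[
    A_0 = \{a \in A : g^a x = x\}, \qquad A_1 = A \setminus A_0.
\]
The two cases $A_0 = \varnothing$ and $A_0 \neq \varnothing$ require quite different tricks.

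When $A_0 = \varnothing$, the obstruction can be removed topologically. For each $a \in A$ we have $g^a x \neq x$, so continuity of the $g$-action provides an open neighborhood $V_a$ of $x$ with $g^a x \notin V_a$; since $A$ is finite, $U' := U \cap \bigcap_{a \in A} V_a$ is still an open neighborhood of $x$. Applying the pointwise $k$-step nilrecurrence of $R$ to $(X, g, x)$ produces some $n \in R$ with $g^n x \in U'$, and by construction $n \notin A$, so this $n$ witnesses what we want.

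When $A_0 \neq \varnothing$, pick $a_0 \in A_0$: then $g^{a_0} x = x$, so the orbit of $x$ under $g$ is finite with some minimal period $d$ dividing $a_0$, and $g^n x = x$ iff $d \mid n$. I would handle this case arithmetically, by showing that $R$ itself contains arbitrarily large multiples of $d$. To see this, for any $M \in \N$ apply the nilrecurrence hypothesis to the $1$-step nilsystem consisting of rotation by $1/(dM)$ on $\T$ at the base point $0$: since the orbit of $0$ is the finite discrete set $\{j/(dM) : 0 \leq j < dM\}$, the condition $0 \in \overline{\{n/(dM) \bmod 1 : n \in R\}}$ actually forces $dM \mid n$ for some $n \in R$, and in particular $d \mid n$ and $n \geq dM$. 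Letting $M \to \infty$ shows $R \cap d\N$ is infinite, so we may choose $n \in (R \cap d\N) \setminus A$; then $g^n x = x \in U$ as required.

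The step I expect to be the main obstacle is exactly this second case. The definition of pointwise nilrecurrence only supplies one return per neighborhood, not infinitely many, so one cannot naively discard finitely many elements of $R$ while remaining inside the originally given system $(X, g)$. The way around this is to exploit the finiteness of the orbit of $x$ to reduce the problem to an arithmetic one --- finding multiples of $d$ --- and to invoke the recurrence hypothesis on \emph{auxiliary} $1$-step nilsystems (the circle rotations by $1/(dM)$), which is the place where the full strength of $R$ being a set of pointwise $k$-step nilrecurrence is genuinely used.
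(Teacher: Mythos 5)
Your proof is correct, but it takes a genuinely different route from the paper's. The paper argues by contradiction in a single stroke: if $R \setminus A$ failed to be nilrecurrent, there would be a $k$-step nilsystem $(X,g)$ with $1_X \notin \overline{g^{R\setminus A}1_X}$; crossing $(X,g)$ with an \emph{irrational} circle rotation $(\T,\alpha)$ at the basepoint $(1_X,0)$ then kills the contribution of the finite set $A$ as well, since $0 \notin \overline{A\alpha}$, and the product system (still $k$-step) contradicts nilrecurrence of $R$. Your argument is direct and splits into cases according to whether the base orbit hits $x$ at a time in $A$: when it does not, a topological shrinking of the neighborhood avoids the finitely many bad orbit points $g^a x$ and the recurrence time for $R$ automatically misses $A$; when it does (so the orbit of $x$ is periodic with period $d$), you test $R$ against auxiliary \emph{rational} circle rotations by $1/(dM)$, exploit that finite orbits have no limit points to force $R$ to contain actual multiples of $dM$, and conclude $R \cap d\N$ is infinite and thus not contained in $A$. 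Both proofs hinge on the same freedom --- nilrecurrence is universally quantified over nilsystems, so one may invoke it on auxiliary systems --- but the paper uses one irrational factor to dispose of $A$ uniformly, whereas you use a family of rational rotations to extract exact-return arithmetic. The paper's version is shorter; yours is more explicit about the dynamics (periodic versus aperiodic base orbit) and shows concretely where the full universality of the hypothesis is used.
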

\begin{proof}
    By way of contradiction, assume $R \setminus A$ is not a set of pointwise $k$-step nilrecurrence. Then there exists a $k$-step nilsystem $(X = G/\Gamma, g)$ such that 
    \[
        1_X \not \in \overline {g^{R \setminus A} 1_X}.
    \]
    Let $\alpha \in \T$ be an irrational number. Since $A$ is finite, we have $0 \not \in \overline{A \alpha}$.
    Consider the $k$-step nilsystem $(X \times \T, (g, \alpha))$. It then follows that the closure
    \[
        \overline{(g, \alpha)^R 
        (1_X, 0)} = \overline{(g, \alpha)^A (1_X, 0)} \cup \overline{(g, \alpha)^{R \setminus A} (1_X, 0)}
    \]
    does not contain $(1_X, 0)$. Therefore $R$ is not a set of pointwise $k$-step nilrecurrence, and this is a contradiction. 
\end{proof}

\begin{lemma}
\label{lem:finite_recurrence}
    Let $R$ be a set of pointwise $k$-step nilrecurrence. Let $X = G/\Gamma$ be a $k$-step nilmanifold with the corresponding metric $d$. Suppose $H$ is a compact subset of $G$. For every $\epsilon > 0$, there exists a finite set $R_{\epsilon} \subset R$ such that the following holds: For all $h \in H$, there exists $r \in R_{\epsilon}$ for which $d(h^r 1_X, 1_X) < \epsilon$.
\end{lemma}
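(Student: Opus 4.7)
The plan is to reduce the uniform statement (over $h \in H$) to pointwise recurrence at single elements, using the compactness of $H$ together with continuity of the map $h \mapsto h^r 1_X$ for each fixed $r$. Concretely, for each $h \in H$ the pair $(X, h)$ is itself a $k$-step nilsystem (since $h \in G$), so by the hypothesis that $R$ is a set of pointwise $k$-step nilrecurrence (in the equivalent form $1_X \in \overline{h^R 1_X}$) I can pick an integer $r_h \in R$ with $d(h^{r_h} 1_X, 1_X) < \epsilon/2$.

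Next I would upgrade this pointwise choice to a neighborhood of $h$ by continuity. For fixed $r_h$, the map $G \to X$ sending $h' \mapsto (h')^{r_h} 1_X$ is continuous, so the set
\[
    U_h := \bigl\{ h' \in G : d((h')^{r_h} 1_X, 1_X) < \epsilon \bigr\}
\]
is open in $G$ and contains $h$. The family $\{U_h : h \in H\}$ is then an open cover of the compact set $H$.

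Extracting a finite subcover $U_{h_1}, \ldots, U_{h_N}$ of $H$, I would set
\[
    R_{\epsilon} := \{r_{h_1}, \ldots, r_{h_N}\} \subset R,
\]
which is finite by construction. For any $h \in H$ there exists $i$ with $h \in U_{h_i}$, and then $r_{h_i} \in R_{\epsilon}$ satisfies $d(h^{r_{h_i}} 1_X, 1_X) < \epsilon$, as required.

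There is no real obstacle here beyond checking that the argument is valid: the one point worth being careful about is that pointwise $k$-step nilrecurrence is applied to a different nilsystem for each $h$ (namely $(X, h)$ rather than a single fixed system), but this is legitimate because the definition quantifies over all $k$-step nilsystems on $X = G/\Gamma$. The use of compactness of $H$ (rather than, say, a countable exhaustion) is essential to obtain a single finite $R_{\epsilon}$ that works uniformly.
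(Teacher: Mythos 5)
Your proof is correct and rests on the same two ingredients as the paper's: compactness of $H$ and continuity of $h \mapsto h^r 1_X$ for each fixed $r \in R$. The paper argues by contradiction using sequential compactness (an accumulation point of the $h_N$ would violate pointwise nilrecurrence), whereas you argue directly via an open cover and finite subcover; these are two phrasings of the same compactness argument, and your direct version is, if anything, a bit cleaner. (The $\epsilon/2$ in your pointwise choice is harmless but unnecessary -- choosing $r_h$ with $d(h^{r_h} 1_X, 1_X) < \epsilon$ already makes $U_h$ an open neighborhood of $h$.)
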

\begin{proof}
    Suppose $R = \{r_n\}_{n \in \N} \subset \N$.
    By contradiction, assume there exists $\epsilon > 0$ such that for all $N \in \N$, there exists $h_N \in H$ satisfying
    \[
        d(h_N^{r_n} 1_X, 1_X) \geq \epsilon \text{ for all } n \leq N.
    \]
    Since $H$ is compact, the sequence $(h_N)_{N \in \N}$ has an accumulation point in $G$, say $g$. It follows that
    \[
        d(g^{r_n} 1_X, 1_X) \geq \epsilon \text{ for all } n \in \N.
    \]
    But this contradicts the hypothesis that $R$ is a set of pointwise $k$-step nilrecurrence.
\end{proof}

\begin{lemma}
\label{lem:nilrecurrence_partition}
A set of pointwise $k$-step nilrecurrence can be partitioned into two sets of pointwise $k$-step nilrecurrence.
\end{lemma}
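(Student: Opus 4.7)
The plan is a diagonal inductive construction. I build $R_1 \subset R$ (then set $R_2 := R \setminus R_1$) by committing, at each stage $j$, a pair of disjoint finite subsets of the remaining reservoir $S_j \subset R$, one to $R_1$ and one to $R_2$, each furnishing $\epsilon_j$-recurrence for a prescribed nilsystem test. Lemmas~\ref{lem:removing_finite_set} and~\ref{lem:finite_recurrence} do the heavy lifting: the former keeps the reservoir a set of pointwise $k$-step nilrecurrence after each finite removal, while the latter supplies the finite subsets that handle each test. Note that the naive odd/even-indexed split does not work: e.g.\ for $R = \N$, the odd naturals fail to recur at the rotation by $1/2$, so the construction must be adaptive.

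First I would fix a countable ``universal'' sequence of tests $(X_j, H_j, \epsilon_j)_{j \in \N}$, where $X_j$ ranges over the (countable) collection of isomorphism classes of $k$-step nilmanifolds with connected, simply connected Lie group $G_j$ admitting a cocompact lattice (countability follows from Mal'cev's theorem: the structure constants in a Mal'cev basis are rational), $H_j$ runs through a fixed compact exhaustion of each $G_j$, and $\epsilon_j$ ranges over $\{1/m\}_{m \in \N}$. Arranged via a bijection $\N \leftrightarrow \N^3$, this enumeration has the property that for every $k$-step nilsystem $(X,g)$ and every $\delta > 0$, infinitely many indices $j$ satisfy $X_j = X$, $g \in H_j$, and $\epsilon_j < \delta$. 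Then at stage $j$, assuming disjoint finite $F_i^{(1)}, F_i^{(2)} \subset R$ for $i<j$ have been chosen, I set $S_j := R \setminus \bigcup_{i<j}(F_i^{(1)} \cup F_i^{(2)})$, which is a set of pointwise $k$-step nilrecurrence by Lemma~\ref{lem:removing_finite_set}; applying Lemma~\ref{lem:finite_recurrence} to $S_j$ with data $(X_j, H_j, \epsilon_j)$ yields a finite $F_j^{(1)} \subset S_j$ delivering $\epsilon_j$-recurrence on all of $H_j$, and a second application (after removing $F_j^{(1)}$) gives a disjoint finite $F_j^{(2)}$ with the same property.

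With $R_1 := \bigcup_j F_j^{(1)}$ and $R_2 := R \setminus R_1 \supset \bigcup_j F_j^{(2)}$, the universality of the enumeration verifies both pieces at once: given any $k$-step nilsystem $(X,g)$ and any $\delta > 0$, pick $j$ with $X_j = X$, $g \in H_j$, $\epsilon_j < \delta$; then some $r \in F_j^{(1)} \subset R_1$ (resp.\ some $r \in F_j^{(2)} \subset R_2$) satisfies $d(g^r 1_X, 1_X) < \epsilon_j < \delta$, whence $1_X \in \overline{g^{R_1} 1_X} \cap \overline{g^{R_2} 1_X}$. The main obstacle is the first step: reducing the uncountable space of $k$-step nilsystems $(X,g)$ to a countable universal family of tests. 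Once this is set up (via countability of nilmanifold isomorphism classes plus $\sigma$-compactness of the associated Lie groups, with the continuous parameter $g$ absorbed into the compact sets $H_j$ through the uniformity of Lemma~\ref{lem:finite_recurrence}), the diagonal argument proceeds routinely.
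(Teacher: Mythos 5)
Your proposal is correct and takes essentially the same route as the paper: both arguments enumerate a countable ``universal'' family of tests $(G_n,\Gamma_n,H_n,\epsilon_n)$ coming from the countability of nilmanifold isomorphism classes, a compact exhaustion of each Lie group, and rational $\epsilon$'s, and both alternately apply \cref{lem:removing_finite_set} and \cref{lem:finite_recurrence} to extract two disjoint sequences of finite sets that each hit every test. The only cosmetic difference is that you explicitly take $R_2 := R\setminus R_1$ to guarantee a genuine partition (the paper leaves $A\cup B\subseteq R$, which is harmless since nilrecurrence passes to supersets), and you justify the countability via rationality of Mal'cev structure constants rather than citing \cite[Section 10.5.2]{host_kra_18} directly; neither change affects the substance.
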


\begin{proof}
It is well-known that up to isomorphism, the set of pair $(G, \Gamma)$ where $G$ is a $k$-step nilpotent Lie group and $\Gamma$ is a discrete, cocompact subgroup is countable (for example see \cite[Section 10.5.2]{host_kra_18}). Let $\mathcal{C}$ be a set of representatives of those non-isomorphic pairs $(G, \Gamma)$. For each $(G, \Gamma) \in \mathcal{C}$, we can partition $G$ into countably many compact subsets, say $G = \bigcup_{j=1}^{\infty} H_{G,j}$. It follows that the set $\mathcal{D} = \{(G, \Gamma, H_{G,j}, \epsilon): (G, \Gamma) \in \mathcal{C}, j \in \N, \epsilon \in \{1/n:n \in \N\} \}$ is countable. Enumerate $\mathcal{D}$ as $\{(G_n, \Gamma_n, H_n, \epsilon_n): n \in \N\}$.

Let $R$ be a set of pointwise $k$-step nilrecurrence. Invoking \cref{lem:finite_recurrence}, there exists a finite set $A_1 \subset R$ such that for every $h \in H_1$, there exists $r \in A_1$ for which
\begin{equation}
\label{eq:A_1}
    d(h^r 1_{G_1/\Gamma_1}, 1_{G_1/\Gamma_1}) < \epsilon_1.
\end{equation}
Since $A_1$ is finite, by \cref{lem:removing_finite_set}, $R \setminus A_1$ is still a set of pointwise $k$-step nilrecurrence. Using \cref{lem:finite_recurrence} again, we can find a finite set $B_1 \subset R \setminus A_1$ for which for every $h \in H_1$, there exists $r \in B_1$ satisfying \eqref{eq:A_1}. 

Removing $A_1$ and $B_1$ from $R$ and applying above procedure repeatedly, we get two sequences of pairwise disjoint subsets of $R$, say $(A_n)_{n \in \N}$ and $(B_n)_{n \in \N}$, such that the following holds: For every $h \in H_n$, there exists $r_1 \in A_n$ and $r_2 \in B_n$ for which
\[
    d(h^{r_1} 1_{G_n/\Gamma_n}, 1_{G_n/\Gamma_n}) < \epsilon_n
\]
and
\[
    d(h^{r_2} 1_{G_n/\Gamma_n}, 1_{G_n/\Gamma_n}) < \epsilon_n.
\]
Let $A = \bigcup_{n \in \N} A_n$ and $B = \bigcup_{n \in \N} B_n$. By construction, $A$ and $B$ are disjoint subset of $R$. It remains to show that $A$ and $B$ are sets of pointwise $k$-step nilrecurrence.

By definition of $\mathcal{D}$, for any $k$-step nilmanifold $X = G/\Gamma$, every $g \in G$ and $\epsilon > 0$, there is an $n \in \N$ such that $(G, \Gamma)$ is isomorphic to $(G_n, \Gamma_n)$, and under this isomorphism $g \in H_n$ and $\epsilon_n < \epsilon$. Therefore, there exists $r \in A_n \subset A$ such that $d(g^r 1_{G/\Gamma}, 1_{G/\Gamma}) < \epsilon_n < \epsilon$. Because $\epsilon$ is arbitrary, we deduce that $A$ is a set of pointwise recurrence for the system $(G/\Gamma, g)$. Since $G, \Gamma, g \in G$ are arbitrary, we have $A$ is a set of pointwise $k$-step nilrecurrence. Similar conclusion is true for $B$.
\end{proof}

The following two lemmas are standard and their proofs are included for completeness.
\begin{lemma}
\label{lem:dec-31-3}
    Let $A, B, C \subset \N$. Suppose that $A$ is separable by some $k$-step nilrotations from $B$ and $C$. Then $A$ is separable by a $k$-step nilrotation from $B \cup C$. 
\end{lemma}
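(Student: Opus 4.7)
The plan is to build a separating nilsystem for $A$ and $B\cup C$ by taking the product of the two given separating systems. By hypothesis, there exist $k$-step nilsystems $(X_1 = G_1/\Gamma_1, g_1)$ with base point $x_1 \in X_1$ and $(X_2 = G_2/\Gamma_2, g_2)$ with base point $x_2 \in X_2$ such that
\[
\overline{g_1^A x_1} \cap \overline{g_1^B x_1} = \varnothing \qquad \text{and} \qquad \overline{g_2^A x_2} \cap \overline{g_2^C x_2} = \varnothing.
\]
The natural candidate is the product system $(X_1 \times X_2, (g_1, g_2))$ with base point $(x_1, x_2)$. Since $G_1 \times G_2$ is a $k$-step nilpotent Lie group with discrete cocompact subgroup $\Gamma_1 \times \Gamma_2$, this is a legitimate $k$-step nilsystem.

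Next I would check the separation of $A$ from $B \cup C$ in the product. The key observation is that closures behave well under coordinate projections: if $\pi_i: X_1 \times X_2 \to X_i$ denotes the projection, then $\pi_i$ is continuous, so for every $S \subset \N$,
\[
\pi_i\bigl(\overline{(g_1, g_2)^{S}(x_1, x_2)}\bigr) \subseteq \overline{g_i^{S} x_i}.
\]
Also, for any $S, T \subset \N$ one has the elementary identity
\[
\overline{(g_1, g_2)^{S \cup T}(x_1, x_2)} = \overline{(g_1, g_2)^{S}(x_1, x_2)} \cup \overline{(g_1, g_2)^{T}(x_1, x_2)}.
\]
Applying this with $S = B$ and $T = C$, it suffices to show that $\overline{(g_1, g_2)^A (x_1, x_2)}$ is disjoint from each of $\overline{(g_1, g_2)^B (x_1, x_2)}$ and $\overline{(g_1, g_2)^C (x_1, x_2)}$.

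For the first, suppose $(y_1, y_2)$ lies in both $\overline{(g_1, g_2)^A(x_1, x_2)}$ and $\overline{(g_1, g_2)^B(x_1, x_2)}$. Projecting via $\pi_1$ yields $y_1 \in \overline{g_1^A x_1} \cap \overline{g_1^B x_1}$, contradicting the first separation hypothesis. The second disjointness is obtained identically by projecting via $\pi_2$ and invoking the separation of $A$ from $C$ in $X_2$. Since this entire argument is essentially a continuity-plus-projection exercise, no real obstacle arises; the only thing worth stating carefully is that the product of $k$-step nilsystems is again a $k$-step nilsystem, which is immediate from the definition.
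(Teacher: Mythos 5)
Your proof is correct and follows essentially the same route as the paper: both pass to the product nilsystem $(X_1\times X_2,(g_1,g_2))$ and observe the $A$-orbit closure is disjoint from the $B\cup C$-orbit closure. You simply spell out the projection/union-of-closures justification that the paper leaves implicit.
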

\begin{proof}
    Let $(X_1 = G_1/\Gamma_1, g_1)$ be a $k$-step nilsystem such that $\overline{g_1^A 1_{X_1}} \cap \overline{g_1^B 1_{X_1}} = \varnothing$. Let $(X_2 = G_2/\Gamma_2, g_2)$ be a $k$-step nilsystem such that $\overline{g_2^A 1_{X_2}} \cap \overline{g_2^C 1_{X_2}} = \varnothing$. Considering the product system $(X_1 \times X_2, (g_1, g_2))$, we have
    \[
        \overline{(g_1, g_2)^A (1_{X_1}, 1_{X_2})} \cap \overline{(g_1, g_2)^{B \cup C} (1_{X_1}, 1_{X_2})} = \varnothing.
    \]
    Therefore, $A$ and $B \cup C$ are separable by a $k$-step nilrotation. 
\end{proof}

%Similar to \cite[Corollary 3.4.3]{Graham_Hare_2013} and  \cite[Lemma 3.5]{Le_2019_1}, we have a lemma.

\begin{lemma}
\label{lem:dec-31-4}
    If $E$ and $F$ are $k$-step interpolation sets and they are separable by a $k$-step nilrotation, then $E \cup F$ is a $k$-step interpolation set.
\end{lemma}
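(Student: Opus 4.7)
The plan is to apply the characterization of interpolation sets via separability (\cref{thm:criterion_nilinterpolation}) to reduce the claim to the following: every two disjoint subsets $A, B \subset E \cup F$ are separable by a $k$-step nilrotation. The previous lemma \cref{lem:dec-31-3} is custom-built to glue separations, so the whole proof is a matter of decomposing $A$ and $B$ along the sets $E$ and $F$ and invoking \cref{lem:dec-31-3} twice.

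More concretely, I would first partition $A = A_1 \sqcup A_2$ where $A_1 = A \cap E$ and $A_2 = A \setminus E \subset F$, and similarly $B = B_1 \sqcup B_2$ with $B_1 = B \cap E$ and $B_2 = B \setminus E \subset F$. Then I would produce four pairwise separations:
\begin{itemize}
\item $A_1$ and $B_1$ are disjoint subsets of the $k$-step interpolation set $E$, hence separable by a $k$-step nilrotation via \cref{thm:criterion_nilinterpolation};
\item $A_2$ and $B_2$ are disjoint subsets of the $k$-step interpolation set $F$, hence separable in the same way;
\item $A_1 \subset E$ and $B_2 \subset F$ are separable because $E$ and $F$ themselves are separable by a $k$-step nilrotation (closures of subsets are contained in closures of supersets);
\item $A_2 \subset F$ and $B_1 \subset E$ are separable by the same reasoning.
\end{itemize}

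Next I would combine these. By \cref{lem:dec-31-3} applied to $A_1$ (with $B_1$ and $B_2$), $A_1$ is separable from $B_1 \cup B_2 = B$ by a $k$-step nilrotation. Similarly, $A_2$ is separable from $B$. Since separability is symmetric (swap the roles of the two sets in the definition), another application of \cref{lem:dec-31-3}, now with $B$ playing the role of $A$ and with $A_1, A_2$ playing the roles of $B, C$, yields that $B$ is separable from $A_1 \cup A_2 = A$. By \cref{thm:criterion_nilinterpolation}, $E \cup F$ is therefore a $k$-step interpolation set.

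There is no real obstacle here: the statement is a gluing result and the hypotheses give precisely the four pairwise separations needed to feed \cref{lem:dec-31-3}. The only mild care required is to note that the notion ``separable by a $k$-step nilrotation'' is symmetric in its two arguments, so \cref{lem:dec-31-3} can be applied with the roles of the three sets interchanged.
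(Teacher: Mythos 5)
Your proof is correct and follows essentially the same approach as the paper: decompose $A$ and $B$ along $E$ and $F$, establish the four pairwise separations (two from $E$, $F$ being interpolation sets, two from $E$, $F$ being separable from each other), and glue them together with two applications of \cref{lem:dec-31-3}. The only cosmetic difference is that you partition with $A \setminus E$ where the paper uses $A \cap F$; these coincide anyway since separability of $E$ and $F$ forces $E \cap F = \varnothing$.
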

\begin{proof}
     Let $A, B$ be two disjoint subsets of $E \cup F$. Then $A = (A \cap E) \cup (A \cap F)$ and $B = (B \cap E) \cup (B \cap F)$. Invoking \cref{thm:criterion_bohr_interpolation}, the sets $A \cap E$ and $B \cap E$ are separable by a $k$-step nilrotation because they are disjoint subsets of a $k$-step interpolation set $E$. On the other hand, $A \cap F$ and $B \cap E$ are separable by a $k$-step nilrotation because $E$ and $F$ are separable by a $k$-step nilrotation. By \cref{lem:dec-31-3}, the sets $A$ and $B \cap E$ are separable by a $k$-step nilrotation. Similarly, $A$ and $B \cap F$ are separable by a $k$-step nilrotation. Invoking \cref{lem:dec-31-3} again, we have $A$ and $B$ are separable by a $k$-step nilrotation. Since $A$ and $B$ are two arbitrary disjoint subsets of $E \cup F$, we get $E \cup F$ is a $k$-step interpolation set by \cref{thm:criterion_bohr_interpolation}.
\end{proof}

We are ready to prove \cref{thm:union_with_finite_set}.
\begin{proof}[Proof of \cref{thm:union_with_finite_set}]
    By induction, it suffices to assume $S = \{a\}$ for some $a \in \N$. By contradiction, suppose $E \cup \{a\}$ is not $k$-interpolation. Since both $E$ and $\{a\}$ are $k$-interpolation, by \cref{lem:dec-31-4}, it follows that $E$ and $\{a\}$ are not separable by any $k$-step nilrotation. Therefore, for every $k$-step nilsystem $(X = G/\Gamma, g)$, $g^a 1_X$ is in the closure of $g^E 1_X$. Or equivalently, $1_X$ is in the closure of $g^{E-a} 1_X$. Since $(X = G/\Gamma, g)$ is an  arbitrary $k$-step nilsystem, we conclude that $E - a$ is a set of pointwise $k$-step nilrecurrence. In view of \cref{lem:nilrecurrence_partition}, the set $E-a$ can be partitioned into two sets of pointwise $k$-step nilrecurrence, say $A$ and $B$. Hence, for every $k$-step nilsystem $(X = G/\Gamma, g)$, we have
    \[
        1_X \in \overline{g^A 1_X} \cap \overline{g^B 1_X}
    \]
    Or equivalently,
    \[
        g^a 1_X \in \overline{g^{A + a} 1_X} \cap \overline{g^{B+a} 1_X} 
    \]
    
    In particular, $\overline{g^{A + a} 1_X} \cap \overline{g^{B+a} 1_X}$ is non-empty. Since $(X = G/\Gamma, g)$ is an arbitrary $k$-step nilsystem, we deduce that $A + a$ and $B+a$ are not separable by any $k$-step nilrotation. Hence, by \cref{thm:criterion_bohr_interpolation}, the set $E = (A + a) \cup (B + a)$ is not $k$-step interpolation. This is a contradiction. 
\end{proof}

\section{Sublacunary sets and interpolation sets}

\subsection{Partitioning a compact set by a system of polynomial equations}
\label{sec:partition_compact_set}

\begin{lemma}
\label{lem:poly_partition_space}
    Let $m, b, \ell \in \N$ and $Q_1, \ldots, Q_{\ell} \in \R[x_1, \ldots, x_m]$ of degree at most $b$. Then the union of the solutions sets of $\ell$ equations $Q_i(x_1, \ldots, x_m) = 0$ for $1 \leq i \leq \ell$ divides $\R^m$ into less than $(2 b \ell)^m$ regions.
    
    %Let $m, k \in \N$. Then there exists a polynomial $F \in \Z[x]$ that depends only on $m$ and $k$ such that the following happens: For any $l \in \N$ and $P_1, \ldots, P_l \in \Q[x_1, \ldots, x_m]$ of degree at most $k$, the solutions of $l$ equations $P_i(x_1, \ldots, x_m) = 0$ for $1 \leq i \leq l$ partition $\R^m$ into less than $F(l)$ regions.
\end{lemma}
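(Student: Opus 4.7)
The plan is to reduce the arrangement of $\ell$ polynomial hypersurfaces to a single hypersurface and then invoke a classical complement bound. Set $Q := \prod_{i=1}^{\ell} Q_i$, so that $\deg Q \leq b\ell$ and $\{Q = 0\} = \bigcup_{i=1}^{\ell} \{Q_i = 0\}$. The regions we wish to count are exactly the connected components of $\R^m \setminus \{Q = 0\}$, which are in turn the components of the open semialgebraic sets $\{Q > 0\}$ and $\{Q < 0\}$.

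Next, I would apply the Oleinik--Petrovsky--Milnor--Thom inequality: for a polynomial $P \in \R[x_1, \ldots, x_m]$ of degree $D$, the number of connected components of $\R^m \setminus \{P = 0\}$ is at most $D(2D - 1)^{m-1}$. With $D = b\ell$, this yields $b\ell \cdot (2b\ell - 1)^{m-1} \leq \tfrac{1}{2}(2b\ell)^m < (2b\ell)^m$, which is the desired bound. If a self-contained argument is preferred, one can instead induct on $m$. The base case $m = 1$ is immediate: $Q$ has at most $b\ell$ real roots, hence at most $b\ell + 1 \leq 2b\ell$ complementary intervals. For the inductive step, count critical points of a generic linear form restricted to a smooth perturbation $\{Q^2 = \varepsilon\}$ of the zero set; each such critical point contributes to at most one component, and their total number is bounded via B\'ezout by the product of the relevant degrees.

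The main obstacle is keeping the constant exactly $(2b\ell)^m$. A naive slicing argument that partitions the base $\R^{m-1}$ via the discriminants $\mathrm{disc}_{x_m}(Q_i)$, the pairwise resultants $\mathrm{Res}_{x_m}(Q_i, Q_j)$, and the leading coefficients of the $Q_i$ in $x_m$ inflates the degrees quadratically. Consequently, the inductive bound obtained that way blows up to something on the order of $(b\ell)^{2m - 1}$ rather than $(2b\ell)^m$, and the induction fails to close. It is precisely the product-polynomial reduction, together with the Milnor--Thom inequality, that keeps the exponent linear in $m$ and delivers the correct leading constant $2^m$; this is the route I would follow.
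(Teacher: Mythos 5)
Your route is genuinely different from the paper's. You collapse the arrangement to a single hypersurface $\{Q = 0\}$, $Q := \prod_i Q_i$, $\deg Q \leq b\ell$, and then invoke an Oleinik--Petrovsky--Milnor--Thom type bound on the number of connected components of the complement of a hypersurface. The paper instead counts vertices of the arrangement directly via B\'ezout: on the boundary of each region there is a point solving some $m\times m$ subsystem $Q_{i_1} = \cdots = Q_{i_m} = 0$; each system has at most $b^m$ solutions (projectively, with multiplicity), there are $\binom{\ell}{m} < \ell^m$ such systems, and each vertex lies on the boundary of at most $2^m$ regions, giving fewer than $(2b\ell)^m$ regions. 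Your reduction is topologically cleaner---in particular, it sidesteps the paper's implicit assumption that $\ell \geq m$, without which a region need not possess any such vertex (in the paper's application $\ell \geq 2m$ always, so this is harmless there)---at the cost of importing a substantial external theorem where the paper needs only B\'ezout.

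One thing to repair: the inequality $D(2D-1)^{m-1}$ you quote is Milnor's bound for the total Betti number of the \emph{variety} $V(P)$, not of its complement $\R^m \setminus V(P)$. Your own base case $m = 1$ already exposes the slip: a degree-$D$ polynomial has at most $D$ real roots but cuts $\R$ into up to $D+1$ intervals, which exceeds $D(2D-1)^{0} = D$. The correct complement version (for instance, Milnor's corollary for open semialgebraic sets gives roughly $\tfrac12(D+2)(D+1)^{m-1}$ components for $\{P>0\}$, hence at most $(D+2)(D+1)^{m-1}$ for $\{P \neq 0\}$) is still strictly below $(2b\ell)^m$ once $b\ell \geq 2$, so the argument closes; just cite the complement form of the theorem rather than the variety form, and handle the trivial case $b = \ell = 1$ separately as you already do.
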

\begin{remark}
    As an illustration of the lemma above, let $Q_1, Q_2 \in \R[x_1, x_2]$ of degree at most $2$. Then for each $i \in \{1, 2\}$ the solution set of $Q_i(x_1, x_2) = 0$ is a conic section. It is easy to see that the number of regions that two conic sections divide $\R^2$ into is not exceeding $13$, which is less than $(2 \cdot 2 \cdot 2)^2$ (The maximum number $13$ is achieved when we have two hyperbolas.) 
\end{remark}
\begin{proof}
    On the boundary of every region, there is a point which is a solution to a system of the form
    \[
        \begin{cases}
            Q_{i_1}(x_1, \ldots, x_m) = 0 \\
            \ldots \\
            Q_{i_m}(x_1, \ldots, x_m) = 0
        \end{cases}
    \]
    where $i_1, \ldots, i_m \in \{1, \ldots, \ell\}$ distinct. By B\'ezout's theorem, each system has maximum $d^m$ solutions counting with multiplicity and counting also $\infty$. There are ${k \choose m} < k^m$ such systems. Hence, there are less than $(kd)^m$ points in $\R^m$ that are solutions of one of those systems. 
    
    Each point is on the boundaries of at most $2^m$ regions where each region is corresponding to a choice of $Q_{i_j} > 0$ or $Q_{i_j} < 0$ for $1 \leq j \leq m$. It follows that there are at most $(2 b \ell)^m$ regions. 
\end{proof}

For the rest of \cref{sec:partition_compact_set}, we fix a $k$-step nilmanifold $X = G/\Gamma$ with $G$ being connected and simply connected.  Assume $\dim G = m$ and the multiplication rule on the Mal'cev coordinates is the following:
    \begin{equation}
    \label{eq:multiplicative_rule_Malcev_basis}
        \underline{s} * \underline{t} = (s_1 + t_1, s_2 + t_2 + P_1(s_1, t_1), \ldots, s_m + t_m + P_{m-1}(s_1, \ldots, s_{m-1}, t_1, \ldots, t_{m-1})))
    \end{equation}
    where for $1 \leq i \leq m-1$, the polynomial $P_i \in \Q[s_1, \ldots, s_i, t_1, \ldots, t_i]$ has degree at most $k-1$. We also fix the maps $\psi, \pi, \widetilde{\psi}$ and  $\varphi$ accordingly as they are defined in \cref{sec:malcev_basis}.

All constants and big-O terms found in this section implicitly depend on $G, \Gamma$, the choice of Mal'cev's basis, maps $\psi, \pi, \widetilde{\psi}, \varphi$, and polynomials $P_1, \ldots, P_{m-1}$.  

\begin{lemma}
\label{lem:nov-20-1}
    There exists $R \in \R[x]$ having all coefficients non-negative and for $n \in\N, 1 \leq i \leq m$, there exist $Q_{i,n} \in \Q[x_1, \ldots, x_m]$ satisfying the followings:
    \begin{enumerate}[label=(\alph*)]
        \item For every $\underline{x} = (x_1, \ldots, x_m) \in \R^m$, 
        \begin{equation}
        \label{eq:formula_for_Malcev}
            \underline{x}^{* n} = \underbrace{\underline{x} * \cdots * \underline{x}}_{n} = (Q_{1,n}(\underline{x}), \ldots, Q_{m,n}(\underline{x})).
        \end{equation}
        \item For $1 \leq i \leq m$ and $n \in \N$, the polynomial $Q_{i,n}$ has degree less than $k^m$.
        \item For $1 \leq i \leq m$ and $n \in \N$, the sum of absolute values of coefficients of $Q_{i,n}$ is bounded above by $R(n)$. 
    \end{enumerate}
\end{lemma}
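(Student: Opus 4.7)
The plan is to prove all three properties simultaneously by induction on the coordinate index $i$, using the recursion $\underline{x}^{*n} = \underline{x}^{*(n-1)} * \underline{x}$. Writing $q_i(n) := (\underline{x}^{*n})_i$, the multiplication rule \eqref{eq:multiplicative_rule_Malcev_basis} yields
\[
    q_i(n) = q_i(n-1) + x_i + P_{i-1}\bigl(q_1(n-1), \ldots, q_{i-1}(n-1), x_1, \ldots, x_{i-1}\bigr),
\]
which telescopes to
\[
    q_i(n) = n x_i + \sum_{j=0}^{n-1} P_{i-1}\bigl(q_1(j), \ldots, q_{i-1}(j), x_1, \ldots, x_{i-1}\bigr).
\]
The base case $i=1$ is immediate: the first Mal'cev coordinate is purely additive, so $q_1(n) = nx_1$, giving $Q_{1,n}(\underline{x}) = n x_1$, which has degree $1$ and coefficient-sum $n$.

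For the inductive step, assume $q_j(n) = Q_{j,n}(\underline{x})$ for all $j < i$, with suitable degree and coefficient bounds. Substituting these polynomial expressions into the telescoped formula exhibits $q_i(n)$ as a polynomial $Q_{i,n}(\underline{x})$, establishing (a). For (b), since $P_{i-1}$ has degree at most $k-1$ in its $2(i-1)$ arguments, and the substituted quantities $Q_{j,n}$ have degree at most $(k-1)^{j-1}$ by induction (with $x_j$ of degree $1$), the composed polynomial has degree at most $(k-1) \cdot (k-1)^{i-2} = (k-1)^{i-1}$; summing over $j$ does not raise the degree in $\underline{x}$. Hence $\deg Q_{i,n} \le (k-1)^{i-1} \le (k-1)^{m-1} < k^m$.

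For (c), I would use the $\ell^1$-norm on coefficients, $|Q|_1 := \sum |\text{coefficients of } Q|$, which satisfies $|Q_1 + Q_2|_1 \le |Q_1|_1 + |Q_2|_1$ and $|Q_1 Q_2|_1 \le |Q_1|_1 \cdot |Q_2|_1$. Setting $A_i(n) := |Q_{i,n}|_1$, and letting $C_{i-1}$ denote the (finite, group-dependent) $\ell^1$-norm of $P_{i-1}$, the telescoped formula gives
\[
    A_i(n) \le n + \sum_{j=0}^{n-1} C_{i-1} \cdot \bigl(\max(A_1(j), \ldots, A_{i-1}(j), 1)\bigr)^{k-1}.
\]
If inductively each $A_j$ is dominated by a polynomial $R_j$ in $n$ with non-negative coefficients, then the right-hand side is bounded by a sum of polynomial values, which is itself a polynomial in $n$ with non-negative coefficients; hence we obtain $A_i(n) \le R_i(n)$ for some such $R_i$. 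Finally, $R(x) := R_1(x) + \cdots + R_m(x)$ has non-negative coefficients and dominates every $A_i(n)$.

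The routine aspect here is checking that the $\ell^1$-norm-of-coefficients estimate is compatible with polynomial substitution and summation; the only thing to be careful about is that the resulting polynomial $R_i$ genuinely has \emph{non-negative} coefficients, which is preserved because summing the values $R_{\max}(0) + R_{\max}(1) + \cdots + R_{\max}(n-1)$ of a non-negative-coefficient polynomial over $j = 0, \ldots, n-1$ produces a polynomial in $n$ whose coefficients (via Faulhaber-type formulas) are non-negative linear combinations of binomial coefficients $\binom{n}{s+1}$, hence non-negative. The main obstacle is purely bookkeeping: organizing the double induction on $i$ (and implicitly on $n$) so that the degree bound, the polynomial form in $\underline{x}$, and the coefficient growth bound all propagate together.
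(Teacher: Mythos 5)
Your proof follows essentially the same path as the paper's: telescope the multiplication rule to get the explicit recursion
\[
Q_{i,n}(\underline{x}) = n x_i + \sum_{j} P_{i-1}\bigl(\ldots, Q_{1,j}(\underline{x}), \ldots, Q_{i-1,j}(\underline{x}), \ldots\bigr),
\]
then induct on $i$ to control the degree, and control the coefficient size via an $\ell^1$-type bound. (The paper packages the coefficient bound slightly differently, by first replacing $P_1, \ldots, P_{m-1}$ with a single majorizing polynomial $P$ with non-negative coefficients and tracking $\tilde Q_{i,n}(\underline 1)$; your $\ell^1$-norm on coefficients is the same idea in different notation. Your degree bound $(k-1)^{i-1}$ is in fact slightly tighter than the paper's $k^{i-1}$, but both give the required $<k^m$.)

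The one place your argument breaks is the final claim that $\sum_{j=0}^{n-1} R_{\max}(j)^{k-1}$, as a polynomial in $n$, has non-negative coefficients "via Faulhaber-type formulas." This is false: Faulhaber sums generally have sign-alternating monomial coefficients (already $\sum_{j=0}^{n-1} j = \tfrac{1}{2}n^2 - \tfrac{1}{2}n$), and the binomial coefficient $\binom{n}{s+1}$ itself has coefficients of both signs; a non-negative combination of the $\binom{n}{s+1}$ need not have non-negative monomial coefficients. What is true — and what the paper exploits — is monotonicity: since $R_{\max}$ has non-negative coefficients it is non-decreasing on $[0,\infty)$, so
\[
\sum_{j=0}^{n-1} R_{\max}(j)^{k-1} \le n\, R_{\max}(n)^{k-1},
\]
and the right-hand side manifestly has non-negative coefficients as a polynomial in $n$. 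Replacing your Faulhaber step with this one-line bound closes the gap, and then your proof is complete and equivalent to the paper's.
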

\begin{proof}
    The existence of $Q_{i,n}$ satisfying \eqref{eq:formula_for_Malcev} follows from the multiplication rule on Mal'cev coordinates. By induction, we can compute $Q_{i,n}$ explicitly.
    \[
        Q_{1,n}(\underline{x}) = n x_1,
    \]
    \[
        Q_{2,n}(\underline{x}) = n x_2 + \sum_{j=1}^{n-1} P_1(x_1, j x_1).
    \]
    In general, for $1 \leq i \leq m$ and $n \in \N$,
    \[
        Q_{i,n}(\underline{x}) = n x_i + \sum_{j=1}^{n-1} P_{i-1}(x_1, \ldots, x_{i-1}, Q_{1, j}(\underline{x}), \ldots, Q_{i-1, j}(\underline{x})) \in \Q[x_1, \ldots, x_m].
    \]
   It also follows that
    \[
        \deg Q_{i,n} \leq \max \{1, \deg P_{i-1} \times \max_{1\leq \ell \leq i-1, 1 \leq j \leq n-1}\{\deg Q_{\ell,j}\}\}.
    \]
    By induction, we get $\deg Q_{i,n} \leq \prod_{j=1}^{i-1} (\deg P_j + 1)$. Since $\deg P_j \leq k -1$, we have 
    \[
        \deg Q_{i,n} \leq k^{i-1} < k^m.
    \]
    
    It remains to prove $(c)$. Let $P \in \R[s_1, \ldots, s_{m-1}, t_1, \ldots, t_{m-1}]$ with the coefficients for any monomial being the maximum of the absolute values of coefficients for corresponding monomials in any of $P_1, \ldots, P_{m-1}$. For $i \in \{1, \ldots, m\}$, $n \in \N$, define polynomials $\tilde{Q}_{i,n} \in \Q[x_1, \ldots, x_m]$ recursively as follows: $\tilde{Q}_{1,n}(\underline{x}) = n x_1$, and for $i \geq 1$,
    \[
        \tilde{Q}_{i,n}(\underline{x}) = n x_i + \sum_{j=1}^{n-1} P(x_1, \ldots, x_{i-1}, \underbrace{1, \ldots, 1}_{m-i}, \tilde{Q}_{1, j}(\underline{x}), \tilde{Q}_{2, j}(\underline{x})  \ldots, \tilde{Q}_{i-1, j}(\underline{x}), \underbrace{1, \ldots, 1}_{m-i}).
    \]
    Then all the coefficients of $\tilde{Q}_{i,n}$ are  non-negative and greater than or equal to the absolute value of the corresponding coefficients in $Q_{i,n}$. 
    
    Now the sum of coefficients of $\tilde{Q}_{i,n}$ is equal to
    \begin{equation}
    \label{eq:nov-16-1}
         \tilde{Q}_{i,n}(\underline{1}) = n + \sum_{j=1}^{n-1} P(\underbrace{1, \ldots, 1}_{m-1}, \tilde{Q}_{1, j}(\underline{1}), \tilde{Q}_{2, j}(\underline{1}), \ldots, \tilde{Q}_{i-1, j}(\underline{1}), \underbrace{1, \ldots, 1}_{m-i}). 
    \end{equation}
    From above formula, it is not hard to see that $\tilde{Q}_{i,n}(\underline{1}) \leq \tilde{Q}_{i,n + 1}(\underline{1})$ and $\tilde{Q}_{i,n}(\underline{1}) \leq \tilde{Q}_{i + 1,n}(\underline{1})$ for all $i \leq m$ and $n \in \N$. Hence the right hand side of \eqref{eq:nov-16-1} is not greater than
    \[
        n + (n-1) P(\underbrace{1, \ldots, 1}_{m-1}, \underbrace{\tilde{Q}_{i-1, n}(\underline{1}), \ldots, \tilde{Q}_{i-1, n}(\underline{1})}_{m-1}).
    \]
    Define $S \in \Q[x]$ by $S(x) = x + (x-1) P(\underbrace{1, \ldots, 1}_{m-1}, \underbrace{x, \ldots, x}_{m-1})$. We then have
    \[
        \tilde{Q}_{i,n}(\underline{1}) \leq S(\tilde{Q}_{i-1,n}(\underline{1}))
    \]
    for all $1 \leq i \leq m$ and $n \in \N$. By induction, 
    \[
        \tilde{Q}_{i,n}(\underline{1}) \leq S (S (\ldots (S(\tilde{Q}_{1,n}(\underline{1}) \ldots ) = S (S (\ldots (S(n) \ldots )
    \]
    where the polynomial $S$ is repeated $m-1$ times. Let $R(n) = S(S( \ldots(S(n) \ldots))$ where polynomial $S$ is repeated $m-1$ times. Then we have $\tilde{Q}_{i,n}(\underline{1}) \leq R(n)$. This finishes our proof. 
\end{proof}

In the rest of \cref{sec:partition_compact_set}, we fix polynomials $Q_{i,n}$ (for $i \in \{1, \ldots, m\}$ and $n \in \N$) found in \cref{lem:nov-20-1}. 

\begin{proposition}
\label{prop:compute_z_i}
    For $\underline{x} := (x_1, \ldots, x_m) \in \R^m$, let $\underline{z} = \underline{z}(x) := (z_1(x), \ldots, z_m(x))$ be the unique element in $\Z^m$ be such that $\underline{x} * \underline{z}(x) \in [0,1)^m$. There exists a constant $c_1 > 0$ such that
    \[
        |z_i(x)| = O((\max_{1 \leq i \leq m}|x_i|)^{c_1})
    \]
    as $\underline{x} \in \R^m$.
\end{proposition}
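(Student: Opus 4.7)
The plan is to solve for the integer vector $\underline{z}(\underline{x}) = (z_1(x),\ldots,z_m(x))$ coordinatewise, exploiting the triangular structure of the Mal'cev multiplication rule \eqref{eq:multiplicative_rule_Malcev_basis}. Comparing the $i$-th component of $\underline{x} * \underline{z}$ with the requirement $\underline{x} * \underline{z} \in [0,1)^m$ gives, with the convention $P_0 \equiv 0$, the identity
\[
z_i(x) = -\lfloor x_i + P_{i-1}(x_1,\ldots,x_{i-1},z_1(x),\ldots,z_{i-1}(x)) \rfloor,
\]
so that each $z_i(x)$ is determined uniquely from $(x_1,\ldots,x_i,z_1(x),\ldots,z_{i-1}(x))$, and
\[
|z_i(x)| \leq |x_i| + |P_{i-1}(x_1,\ldots,x_{i-1},z_1(x),\ldots,z_{i-1}(x))| + 1.
\]

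The main step is then an induction on $i$. Setting $M := \max(1,|x_1|,\ldots,|x_m|)$, I would show that $|z_i(x)| = O(M^{(k-1)^{i-1}})$, with all implicit constants depending only on the fixed data $(G,\Gamma,$ Mal'cev basis, and $P_1,\ldots,P_{m-1})$, in line with the convention stated at the beginning of \cref{sec:partition_compact_set}. The base case $i=1$ reduces to $z_1(x) = -\lfloor x_1 \rfloor$, hence $|z_1(x)| \leq M+1$. For the inductive step, since each $P_{i-1}$ is a fixed polynomial of degree at most $k-1$ in $2(i-1)$ variables with fixed coefficients, its value at arguments bounded by $O(M^{(k-1)^{i-2}})$ is itself of size $O(M^{(k-1)^{i-1}})$; plugging this back into the displayed bound for $|z_i(x)|$ closes the induction. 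Taking $c_1 := (k-1)^{m-1}$ (or, more generously and uniformly across all values of $k \geq 1$, $c_1 := k^m$) then gives the proposition in the regime $M \geq 1$; the regime $M < 1$ is automatic, since $\underline{x}$ then ranges over a fixed compact set on which $\underline{z}(\underline{x})$ takes only finitely many values.

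I do not anticipate any serious obstacle: the triangular form of $*$ makes the determination of $z_i(x)$ a single rounding applied to a polynomial expression in previously computed quantities, so the induction runs mechanically. The only point requiring mild care is the book-keeping of constants (number of monomials of $P_{i-1}$ and the bound on their coefficients), all of which are absorbed into the implicit constants in the big-$O$ notation by the standing convention fixed at the beginning of \cref{sec:partition_compact_set}.
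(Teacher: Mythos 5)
Your proposal takes essentially the same approach as the paper: it solves for $z_i(x)$ coordinatewise via the triangular structure of the Mal'cev multiplication rule, obtaining $z_i(x) = -\lfloor x_i + P_{i-1}(x_1,\ldots,x_{i-1},z_1,\ldots,z_{i-1}) \rfloor$, and then runs an induction on $i$ to bound $|z_i(x)|$ polynomially in $M = \max_j |x_j|$. The only cosmetic difference is that you track the degree bound $\deg P_{i-1} \le k-1$ exactly and arrive at the exponent $(k-1)^{m-1}$, while the paper rounds the degree up to $k$ and takes $c_1 = k^{m-1}$; either choice serves the purpose of the proposition.
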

\begin{proof}
    Using \eqref{eq:multiplicative_rule_Malcev_basis}, we can compute $\underline{z}(x)$ explicitly.
    \[
        z_1 = - \lfloor x_1 \rfloor,
    \]
    \[
        z_2 = - \lfloor x_2 + P_1(x_1, z_1) \rfloor,
    \]
    \[
        z_3 = - \lfloor x_3 + P_2(x_1, x_2, z_1, z_2) \rfloor
    \]
    where $\lfloor \cdot \rfloor$ means taking integer parts. 
    In general, for $2 \leq i \leq m$,
    \[
        z_i = - \lfloor x_i + P_{i-1}(x_1, \ldots, x_{i-1}, z_1, \ldots, z_{i-1}) \rfloor.
    \]
    Let $M = \max_{1 \leq i \leq m} |x_i|$. Since $\deg P_i < k$ for every $i$, it follows that
    \[
        |z_1| \leq |x_1| \leq M,
    \]
    \[
        |z_2| \leq |x_2| + |P_1(x_1, z_1)| \leq M + O(M^k) = O(M^k),
    \]
    \[
        |z_3| \leq |x_3| + |P_2(x_1, z_1, x_2, z_2)| \leq M + O((M^k)^k) = O(M^{k^2}).
    \]
    Inductively, we can show that $|z_i| = O(M^{k^{i-1}})$ for $1 \leq i \leq m$. Letting $c_1 = k^{m-1}$, we have our conclusion.
\end{proof}
\begin{corollary}
\label{cor:U_M}
    For $\underline{x} \in \R^m$ and $n \in \N$, let $\underline{z}_{n}(x) = (z_{1, n}, \ldots, z_{m,n}) \in \Z^m$ be such that $\underline{x}^{*n} * \underline{z}_n(x) \in [0, 1)^m$. Let $M > 0$. Then there exists a constant $c_2 = c_2(M) > 0$ such that the following is true: For $\underline{x} \in [-M, M]^m$ and $1 \leq i \leq m$,
    \[
        |z_{i, n}(x)| =  O_M(n^{c_2}).
    \]
\end{corollary}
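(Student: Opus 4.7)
The plan is to combine \cref{lem:nov-20-1} (which bounds $\underline{x}^{*n}$ componentwise in terms of $n$ and $\underline{x}$) with \cref{prop:compute_z_i} (which bounds the integer ``reduction vector'' of a single point in terms of that point's sup-norm). The key observation is that, by the uniqueness statement in \cref{prop:compute_z_i}, the vector $\underline{z}_n(x)$ associated to $\underline{x}^{*n}$ is precisely the $\underline{z}(\cdot)$ from that proposition applied to $\underline{y} := \underline{x}^{*n}$. So we only need a sup-norm bound on $\underline{y}$.

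First I would fix $\underline{x} \in [-M,M]^m$ and estimate each coordinate of $\underline{x}^{*n}$. By \cref{lem:nov-20-1}(a), the $i$-th coordinate equals $Q_{i,n}(\underline{x})$. By parts (b) and (c), $Q_{i,n}$ has degree less than $k^m$ and the sum of absolute values of its coefficients is at most $R(n)$, where $R$ is a fixed polynomial with non-negative coefficients. Hence, for every $1 \leq i \leq m$,
\[
    |Q_{i,n}(\underline{x})| \leq R(n) \cdot \max(1, M)^{k^m}.
\]
Since $R$ is polynomial, $R(n) = O(n^{\deg R})$, so
\[
    \max_{1 \leq i \leq m} |Q_{i,n}(\underline{x})| = O_M\bigl(n^{\deg R}\bigr).
\]

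Next I would invoke \cref{prop:compute_z_i} with $\underline{y} = \underline{x}^{*n}$. By the uniqueness of $\underline{z}$ making $\underline{y} * \underline{z} \in [0,1)^m$, we have $\underline{z}_n(x) = \underline{z}(\underline{y})$. The proposition then yields
\[
    |z_{i,n}(x)| = O\Bigl(\bigl(\max_{1 \leq j \leq m} |y_j|\bigr)^{c_1}\Bigr) = O_M\bigl(n^{c_1 \deg R}\bigr).
\]
Setting $c_2 := c_1 \deg R$ (which depends only on the fixed data $G, \Gamma$ and the Mal'cev basis, hence only on $M$ through the implicit constant) gives the desired bound.

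There is essentially no obstacle here; the statement is a straightforward composition of the two preceding results, and the only minor point worth spelling out is the identification $\underline{z}_n(x) = \underline{z}(\underline{x}^{*n})$, which is immediate from the uniqueness in the definition of $\underline{z}$.
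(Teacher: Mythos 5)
Your proof is correct and follows essentially the same argument as the paper: bound the Mal'cev coordinates of $\underline{x}^{*n}$ via \cref{lem:nov-20-1}, then feed that bound into \cref{prop:compute_z_i} to control $\underline{z}_n(x)$, arriving at the same exponent $c_2 = c_1 \deg R$. The only minor additions you make are spelling out the identification $\underline{z}_n(x) = \underline{z}(\underline{x}^{*n})$ and using $\max(1,M)^{k^m}$ in place of $M^{k^m}$, which is a slightly more careful constant.
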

\begin{proof}
    By \cref{lem:nov-20-1}, there exists polynomial $R \in \R[x]$ such that
    \[
        \underline{x}^{*n} = (Q_{1, n}(\underline{x}), \ldots, Q_{m,n}(\underline{x}))
    \]
    where $Q_{i,n} \in \Q[x_1, \ldots, x_m]$ of degree less than $k^m$ and the sum of absolute values of coefficients of $Q_{i,n}$ is bounded above by $R(n)$. It follows that if $\underline{x} \in [-M, M]^m$, then
    \[
        |Q_{i,n}(\underline{x})| \leq R(n)M^{k^m}.
    \]
    In view of \cref{prop:compute_z_i}, there exists a constant $c_1$ such that for $\underline{x} \in [-M, M]^m$, 
    \[
        |z_{i,n}(\underline{x})| = O((\max_{1 \leq i \leq m}|Q_{i,n}(\underline{x})|)^{c_1}) = O \left( \left(R(n) M^{k^m}\right)^{c_1}\right) = O_{M}(n^{c_2})
    \]
    where $c_2 = c_1 \deg R$.
\end{proof}

\begin{proposition}
\label{prop:partition_MM}
    Let $M > 0$. There exists constant $c_3 = c_3(M)$ such that the following holds: For every $0 < \epsilon < 1$, if $1 \leq n_1 < n_2 < \ldots < n_{\ell} \leq n$ are natural numbers, then the union of solutions of $\ell(\ell-1)/2$ equations
    \begin{equation}
    \label{eq:ell_things}
        \lVert \varphi(\underline{x}^{* n_i}) - \varphi(\underline{x}^{* n_j}) \rVert_{\T^m} = \epsilon \;\; \; (1 \leq i < j \leq \ell)
    \end{equation}
    partition $[-M, M]^m$ into $O_{M}(n^{c_3})$ regions.
\end{proposition}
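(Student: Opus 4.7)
The plan is to exploit the piecewise-polynomial nature of the maps $\underline{x} \mapsto \varphi(\underline{x}^{*n_i})$ on $[-M,M]^m$: off a finite union of polynomial surfaces, each coordinate of $\varphi(\underline{x}^{*n_i})$ is a polynomial in $\underline{x}$ of degree bounded by a constant depending only on $k$ and $m$. Once we restrict to a piece on which all the integer shifts entering the definition of $\varphi$ are constant, the equations \eqref{eq:ell_things} become contained in a union of polynomial equations, and two applications of \cref{lem:poly_partition_space} will yield the bound.

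First, I would partition $[-M,M]^m$ into ``base cells'' on which all integer-valued functions $z_{j, n_i}(\underline{x})$ (for $j \in \{1,\ldots,m\}$ and $i \in \{1,\ldots,\ell\}$) are constant. The recursive formula from the proof of \cref{prop:compute_z_i} shows that, inside any region where $z_{1,n_i},\ldots,z_{s-1,n_i}$ are constant, the discontinuity locus of $z_{s,n_i}$ is the level set of a polynomial in $\underline{x}$ whose degree is bounded by a constant depending only on $k$ and $m$ (by \cref{lem:nov-20-1} combined with $\deg P_j \leq k-1$). By \cref{cor:U_M}, on $[-M,M]^m$ each $z_{j,n_i}$ takes at most $O_M(n^{c_2})$ distinct values, so only polynomially many level-set equations of bounded degree are needed to describe all these discontinuity loci. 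Applying \cref{lem:poly_partition_space}, these equations carve $[-M,M]^m$ into $O_M(n^{c'})$ base cells, for some constant $c' = c'(M)$.

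Second, fix a base cell and a pair $i < j$. On this cell, both $\varphi(\underline{x}^{*n_i})$ and $\varphi(\underline{x}^{*n_j})$ are polynomial vectors in $\underline{x}$ of degree bounded by a constant depending only on $k$ and $m$, and each coordinate lies in $[0,1)$. The difference $\varphi_s(\underline{x}^{*n_i}) - \varphi_s(\underline{x}^{*n_j})$ therefore lies in $(-1,1)$, so its distance to the nearest integer equals $\epsilon$ exactly when this difference belongs to $\{\pm\epsilon,\pm(1-\epsilon)\}$. Hence the equation
\[
    \lVert \varphi(\underline{x}^{*n_i}) - \varphi(\underline{x}^{*n_j}) \rVert_{\T^m} = \epsilon
\]
is contained in the union of $4m$ polynomial equations. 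Summing over the $\ell(\ell-1)/2 = O(n^2)$ pairs gives at most $O(n^2)$ polynomial equations of bounded degree, which by \cref{lem:poly_partition_space} subdivide each base cell into $O(n^{2m})$ subregions.

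Multiplying, the total number of regions is $O_M(n^{c'}) \cdot O(n^{2m}) = O_M(n^{c_3})$ for a suitable $c_3 = c_3(M)$. The main obstacle is the first step, where the discontinuity loci of the $z_{j,n_i}$ must be described as polynomial surfaces of uniformly bounded degree: the recursive definition of $z_{s,n_i}$ means that its boundary equations depend on the constants taken by the lower-index $z$'s on each sub-region. One handles this by taking, for each tuple of candidate constant values of $(z_{1,n_i},\ldots,z_{s-1,n_i})$, the associated polynomial boundary equation, and invoking \cref{cor:U_M} to ensure that only polynomially many such tuples are relevant on $[-M,M]^m$.
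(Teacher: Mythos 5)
Your two-stage decomposition is a valid alternative to the paper's proof and captures the same key idea: on $[-M,M]^m$ the map $\underline{x}\mapsto\varphi(\underline{x}^{*n_i})$ is piecewise polynomial of uniformly bounded degree, and the number of pieces is polynomial in $n$ because of \cref{cor:U_M}. The paper substitutes all possible integer values of the $z$'s directly into the difference equations \eqref{eq:expanding_varphi}, thereby covering the solution set of \eqref{eq:ell_things} by polynomially many bounded-degree polynomial equations and applying \cref{lem:poly_partition_space} exactly once; you instead first carve out base cells on which the $z$'s are constant, and then count subregions per cell. The two routes rely on the same ingredients and give the same type of bound. You are also slightly more careful than the paper in noting that for $y\in(-1,1)$ the condition $\lVert y\rVert_{\T}=\epsilon$ forces $y\in\{\pm\epsilon,\pm(1-\epsilon)\}$, not merely $y=\pm\epsilon$; this only changes the implied constant.

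One step needs tightening. \cref{lem:poly_partition_space} bounds the number of regions into which a family of polynomial equations cuts $\R^m$, not the number of pieces into which it cuts a single base cell. Base cells need not be convex, and a single region of $\R^m\setminus P$ can meet a cell in several components, so the per-cell count $O(n^{2m})$ does not follow directly from the lemma, and the multiplication is therefore not yet justified as written. The fix is straightforward and brings you back in line with the paper's structure: combine the base-cell boundary polynomials, the step-two polynomials (one for each pair $(i,j)$, each choice of coordinate, each sign, and each tuple of candidate integer values for the relevant $z$'s), and the $2m$ equations $x_i=\pm M$ into a single list, and apply \cref{lem:poly_partition_space} once to the whole collection. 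Each of these lists has size polynomial in $n$ and degree bounded in terms of $k$ and $m$ only, so the lemma yields $O_M(n^{c_3})$ regions in one shot; the product $O_M(n^{c'})\cdot O(n^{2m})$ then becomes unnecessary.
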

\begin{proof}
    Suppose $\varphi = (\varphi_1, \ldots, \varphi_m)$ where $\varphi_i: \R^m \to [0,1)$ for $1 \leq i \leq m$.
    By definition of the metric $\lVert \cdot \rVert_{\T^m}$, for $a, b \leq n$, solutions of the equation
    \[
        \lVert \varphi(\underline{x}^{*a}) - \varphi(\underline{x}^{*b}) \rVert_{\T^m} = \epsilon
    \]
    are solutions of at least one of $m$ equations
    \begin{equation}
    \label{eq:underline}
        \lVert \varphi_i(\underline{x}^{*a}) - \varphi_i(\underline{x}^{*b}) \rVert_{\T} = \epsilon
    \end{equation}
    for $1 \leq i \leq m$.

    Since $\varphi_i(\underline{x}) \in [0, 1)$ all $\underline{x} \in \R^m$, solutions of \eqref{eq:underline} are solutions of at least one of two equations:
    \begin{equation}
    \label{eq:pm_epsilon}
        \varphi_i(\underline{x}^{*a}) - \varphi_i(\underline{x}^{*b}) = \pm \epsilon. 
    \end{equation}
    Because $\varphi(\underline{x}) = \underline{x}*\underline{z}(x)$, \eqref{eq:pm_epsilon} is equivalent to
    \begin{multline}
    \label{eq:expanding_varphi}
        Q_{i,a}(\underline{x}) + z_{i,a}(\underline{x}) + P_{i-1}(Q_{1,a}(\underline{x}), \ldots, Q_{i-1, a}(\underline{x}), z_{1, a}(\underline{x}), \ldots, z_{i-1,a}(\underline{x}))  \\
        - Q_{i,b}(\underline{x}) - z_{i,b}(\underline{x}) - P_{i-1}(Q_{1,b}(\underline{x}), \ldots, Q_{i-1, b}(\underline{x}), z_{1, b}(\underline{x}), \ldots, z_{i-1,b}(\underline{x})) = \pm \epsilon.
    \end{multline}
    
    By \cref{cor:U_M}, there exists a constant $c_2 = c_2(M)$ such that for $\underline{x} \in [-M, M]^m$ and $i \in \{1, 2, \ldots, m\}$, we have
    \[
        |z_{i, a}(\underline{x})| = O_M(a^{c_2})
    \]
    and
    \[
        |z_{i, b}(\underline{x})| = O_M(b^{c_2}).
    \]
    By replacing $z_{i,a}$ and $z_{i,b}$ with possible integer values that they can take, the solution set of \eqref{eq:expanding_varphi} is a subset of the union of solutions of $2 (2 O_M(a^{c_2}) + 1)^i (2 O_M(b^{c_2}) + 1)^i = O_M((ab)^{mc_2}) = O_M(n^{2 m c_2})$ equations:
    \begin{multline}
    \label{eq:expanding_more}
        Q_{i,a}(\underline{x}) + d_{i,a} + P_{i-1}(Q_{1,a}(\underline{x}), \ldots, Q_{i-1, a}(\underline{x}), d_{1,a}, \ldots, d_{i-1,a})  \\
        - Q_{i,b}(\underline{x}) - d_{i,b} - P_{i-1}(Q_{1,b}(\underline{x}), \ldots, Q_{i-1, b}(\underline{x}), d_{1, b}, \ldots, d_{i-1,b}) = \pm \epsilon
    \end{multline}
    where $d_{j,a}, d_{j,b}$ are integers such that $d_{j,a} \in [-O_M(a^{c_2}), O_M(a^{c_2})]$ and $d_{j,b} \in [-O_M(b^{c_2}), O_M(b^{c_2})]$ for $j = 1, 2, \ldots, i$.
    
    It follows that the union of solutions of $\ell(\ell-1)/2$ equations in \eqref{eq:ell_things}
    is a subset of the union of solutions of $\ell (\ell - 1)/2 \times O_M(n^{2 m c_2})$ polynomial equations
    \begin{multline}
    \label{eq:expanding_more_next}
        Q_{i,a}(\underline{x}) + d_{i,a} + P_{i-1}(Q_{1,a}(\underline{x}), \ldots, Q_{i-1, a}(\underline{x}), d_{1,a}, \ldots, d_{i-1,a})  \\
        - Q_{i,b}(\underline{x}) - d_{i,b} - P_{i-1}(Q_{1,b}(\underline{x}), \ldots, Q_{i-1, b}(\underline{x}), d_{1, b}, \ldots, d_{i-1,b}) = \pm \epsilon
    \end{multline}
    where 
    \[
        d_{1,{n_{j_1}}}, \ldots, c_{i,{n_{j_1}}} \in [-O_M(n_{j_1}^{c_2}), O_M(n_{j_1}^{c_2})] \cap \Z,
    \]
    and
    \[
        d_{1,{n_{j_2}}}, \ldots, d_{i,{n_{j_2}}} \in [-O_M(n_{j_2}^{c_2}), O_M(n_{j_2}^{c_2})] \cap \Z
    \]
    for $1 \leq j_1 < j_2 \leq \ell$. 
    
    Since $\ell \leq n$, we have $\ell (\ell - 1)/2 \times O_M(n^{2 m c_2}) = O_M(n^{2 m c_2 + 2})$. Moreover, because every $P_{i-1}$ has degree less than $k$ and every $Q_{i,j}$ has degree less than $k^m$, the left hand side of \eqref{eq:expanding_more_next} is a polynomial of degree less than $k^{km}$.
    
    Observe that the number of regions that the union of solutions of $\ell(\ell - 1)/2$ equations in \eqref{eq:ell_things} partition $[-M, M]^m$ is not greater than the following number: The number of regions that the union of solutions of $\ell(\ell - 1)/2$ equations that appear in \eqref{eq:expanding_more_next} and of $2 m$ equations $x_i = \pm M$ for $ i \in \{1, 2, \ldots, m\}$ partition $\R^m$. In total there are at most $O_M(n^{2m c_2}) + 2m = O_M(n^{2m c_2})$ polynomial equations in which the degree of each polynomial is less than $k^{km}$. According to \cref{lem:poly_partition_space}, the number of regions that the union of solutions of these equations partition $\R^m$ is less than 
    \[
        \left( 2 k^{km} O_M(n^{2m c_2}) \right)^m = O_M(n^{2 c_2 m^2}). 
    \]
     Letting $c_3 = 2 c_2 m^2$, we have our conclusion. 
\end{proof}

%Let $G$ be a connected and simply connected $k$-step nilpotent Lie group and $\Gamma \leq G$ discrete, cocompact. Assume $\dim(G) = m$ and let $(\xi_1, \ldots, \xi_m)$ be a Mal'cev basis for $X = G/\Gamma$ and $\psi: \R^m \to G$ the associated diffeomorphism. 

To state the next proposition, we need a definition. 
\begin{definition}
\label{def:1}
    Let $A, B \subset \N$ and $g \in G$. For $\epsilon > 0$, we say $A$ and $B$ are \emph{$\epsilon$-separable by $g$} if
    \[
        d(g^A 1_X, g^B 1_X) := \inf_{a \in A, b \in B} d(g^a 1_X, g^b 1_X) \geq \epsilon.
    \]
\end{definition}

\begin{proposition}
\label{prop:not_epsilon_separable}
    Let $E \subset \N$ be a sublacunary set. Then for every $H \subset G$ compact and $\epsilon > 0$, there exist two finite disjoint subsets $A, B \subset E$ such that $A$ and $B$ are not $\epsilon$-separable by any $g \in H$.
\end{proposition}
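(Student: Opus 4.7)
My plan is to combine three ingredients: the polynomial region count from \cref{prop:partition_MM}, a pigeonhole argument in the compact nilmanifold $X$ that forces a large clique inside each region's ``closeness graph'', and a random $2$-coloring to produce $A$ and $B$ by the probabilistic method. Sublacunarity enters through the fact that $|E \cap [1,n]|/\log n \to \infty$, which is exactly what is needed to beat the polynomial region count in the union bound.

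First, I would use the Mal'cev chart $\psi$ to identify $H$ with a bounded subset of $\R^m$, so $\psi^{-1}(H) \subset [-M,M]^m$ for some $M > 0$. Write $E = \{r_1 < r_2 < \ldots\}$ and $\ell(n) := |E \cap [1,n]|$; sublacunarity readily implies $\ell(n)/\log n \to \infty$ (for any $\delta > 0$, $r_n < e^{\delta n}$ for large $n$, giving $\ell(N) \geq \delta^{-1}\log N + O(1)$). Next, cover the compact space $X$ by $N := N(\epsilon/4)$ open balls of radius $\epsilon/4$. For each fixed $g \in H$, pigeonhole applied to the $\ell(n)$ points $\{g^{r_i} 1_X\}_{i=1}^{\ell(n)}$ places at least $k(n) := \lceil \ell(n)/N\rceil$ of them in a common ball, hence pairwise within distance $\epsilon/2$.

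Now apply \cref{prop:partition_MM} with $\epsilon/2$ in place of $\epsilon$: the $\binom{\ell(n)}{2}$ equations $\lVert \varphi(\underline{x}^{*r_i}) - \varphi(\underline{x}^{*r_j})\rVert_{\T^m} = \epsilon/2$ (as $r_i, r_j$ range over $E \cap [1,n]$) partition $[-M,M]^m$ into $O_M(n^{c_3})$ open regions. Within each such region the ``closeness graph'' $\Gamma$ on vertex set $\{r_1,\ldots,r_{\ell(n)}\}$, whose edges are the pairs with $d(g^{r_i} 1_X, g^{r_j} 1_X) < \epsilon/2$, does not depend on the choice of $g$ in the region, and by the preceding paragraph $\Gamma$ contains a clique of size at least $k(n)$. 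Pick a uniformly random $2$-coloring of the vertex set into $A \sqcup B$: the probability that $\Gamma$ has no edge between $A$ and $B$ is at most the probability that its guaranteed clique is monochromatic, namely $2^{1 - k(n)}$. A union bound over the $O_M(n^{c_3})$ regions yields total probability at most $O_M(n^{c_3}) \cdot 2^{1 - k(n)}$, which tends to $0$ as $n \to \infty$ since $k(n)/\log n \to \infty$.

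For all sufficiently large $n$ there is therefore a choice of disjoint finite sets $A, B \subset E \cap [1,n]$ such that, for every region and every $g$ in it, some $(a,b) \in A \times B$ satisfies $d(g^a 1_X, g^b 1_X) < \epsilon/2$. Boundary points of the partition are handled by continuity: if $g_k \to g$ with each $g_k$ in some region, one can pass to a subsequence on which the witness pair $(a,b)$ is constant (since $A \times B$ is finite), giving $d(g^a 1_X, g^b 1_X) \leq \epsilon/2 < \epsilon$ in the limit. Hence $A$ and $B$ are not $\epsilon$-separable by any $g \in H$. I expect the main obstacle to be the clique step, specifically verifying that the pigeonhole lower bound depends only on the region (so that it ``locks in'' uniformly across the region) and carrying out the bookkeeping that makes the polynomial region count and the exponential clique-monochromaticity bound interact favorably precisely under sublacunarity.
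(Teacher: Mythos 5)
Your argument is correct, and while it rests on the same structural pillars as the paper's proof---reduction to Mal'cev coordinates, the polynomial region count from \cref{prop:partition_MM}, and the quantitative form of sublacunarity $|E\cap[1,n]|/\log n \to \infty$---the combinatorial endgame is genuinely different. The paper argues by contradiction and counts: if every subset of $R_N = \{r_1,\ldots,r_N\}$ were $\epsilon$-separable from its complement by some $g \in H$, then each fixed $g$ could ``create'' at most $2^{O(\epsilon^{-m})}$ such separable subsets (any one must be a union of connected components of the $\epsilon$-closeness graph on the orbit $g^{R_N}1_X$, and there are at most $O(\epsilon^{-m})$ components), so the total number of separable subsets is $\lesssim 2^{O(\epsilon^{-m})} \cdot r_N^{c_3} < 2^N$ once $N$ is large. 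You instead argue directly: pigeonholing the orbit into an $\epsilon/4$-cover of $X$ forces a clique of size $\gtrsim \ell(n)/N(\epsilon/4)$ in the $\epsilon/2$-closeness graph, which is constant on each region by \cref{prop:partition_MM}; a uniformly random two-coloring then splits this clique except with probability $2^{1-k(n)}$, and the union bound over $O_M(n^{c_3})$ regions tends to $0$. Your pigeonhole-clique step plays the role of the paper's connected-component bound, and the probabilistic method replaces the contradiction-by-counting. The trade-off is minor: the paper's per-$g$ bound $2^{O(\epsilon^{-m})}$ is a fixed constant, whereas your clique parameter $k(n)$ must grow, but the $1/N(\epsilon/4)$ loss in the exponent is harmless because sublacunarity makes $\ell(n)/\log n \to \infty$. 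Your handling of boundary points by continuity, and the $\epsilon/4$ versus $\epsilon/2$ bookkeeping to keep inequalities strict, are both correct and indeed necessary.
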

\begin{proof}
    By contradiction, assume there exist $H \subset G$ compact and $\epsilon > 0$ such that every two finite disjoint subsets of $E$ are $\epsilon$-separable by some $g \in H$. Since $H$ is compact, it is contained inside $\psi([-M, M]^m)$ for some $M$ sufficiently large. Without loss of generality, assume $H = \psi([-M, M]^m)$. 
    
 %   \textcolor{red}{To simplify notation, we in fact assume $H = \psi([0,1]^m)$. This assumption does not effect our proof. (We may not need this assumption.)} 
    
    Suppose $E = \{r_n\}_{n \in \N}$. For $N \in \N$, let $R = R_N = \{r_1, r_2, \ldots, r_N\}$. We say a subset $A \subset R$ an \emph{$R$-nice set} if $A$ and $R \setminus A$ are $\epsilon$-separable by some $g \in H$. By our assumption, every subset of $R$ is an $R$-nice set. In other words, there are $2^{|R|} = 2^N$ $R$-nice sets. We will show this is impossible when $N$ is sufficiently large. 
    
    For an $R$-nice set $A$, there exists $g \in H$ such that $d(g^A 1_X, g^{R \setminus A} 1_X) \geq \epsilon$. Associate that $g$ to $A$ by saying \emph{$g$ creates $A$}. We will count number of $R$-nice sets created by $g$ as $g$ ranges over $H$.
    
    For $g \in H$, the set $g^R 1_X$  is a finite subset of $X$. Connect two elements of $g^R 1_X$ if their distance is less than $\epsilon$. Since the metric on $X$ comes from the metric $\lVert \cdot \rVert_{\T^m}$ on $[0,1)^m$, the number of connected components is not greater than $1/\epsilon^m$. 
    
    If $A$ is an $R$-nice set created by $g$, then $A$ must solely consist of some of those connected components. More precisely, if $a, b \in R$ such that $a \in A$ and $g^a 1_X, g^b 1_X$ are in the same connected component, then $b$ must also belong to $A$. (Otherwise, if $b \in R\setminus A$, then on the connected path from $g^a 1_X$ to $g^b 1_X$, there would be two adjacent points whose distance is less than $\epsilon$ but one belongs to $g^A 1_X$ while the other belongs to $g^{R\setminus A} 1_X$. This would contradict the assumption that $d(g^A 1_X, g^{R \setminus A} 1_X) \geq \epsilon$.) Therefore, the number of $R$-nice sets created by a fixed $g$ is at most the number of ways to pick connected components of $g^R 1_X$, which is not greater than $2^{(1/\epsilon)^m}$. 
    
    For $g, h \in H$, $g$ and $h$ create the same collection of $R$-nice sets if for every $a, b \in R$, 
    \[
        d(g^a 1_X, g^b 1_X) \geq \epsilon \text{ if and only if } d(h^a 1_X, h^b 1_X) \geq \epsilon.
    \]
    Or equivalently,
    \[
        \lVert \tilde{\psi} (g^a 1_X) - \tilde{\psi} (g^b 1_X) \rVert_{\T^m} \geq \epsilon \text{ if and only if } \lVert \tilde{\psi}(h^a 1_X) - \tilde{\psi}(h^b 1_X) \rVert_{\T^m} \geq \epsilon.
    \]
    
    Consider $N(N-1)/2$ equations $\lVert \tilde{\psi}(g^a 1_X) - \tilde{\psi}(g^b 1_X) \rVert_{\T^m} = \epsilon$ for $a < b \in R$ (In these equations, the unknown is $g$). The solutions of these equations partitions $H$ into disjoint regions. Note that the map $g \mapsto \lVert \tilde{\psi}(g^a 1_X) - \tilde{\psi}(g^b 1_X) \rVert_{\T^m}$ is continuous, and hence, every $g$ in the same region creates the same collection of $R$-nice sets. It remains to count the number of regions. 
    
    Let $\underline{x} = \psi^{-1}(g)$. Then the equation $\lVert \psi(g^a) - \psi(g^b) \rVert_{\T^m} = \epsilon$ becomes
    \begin{equation}
    \label{eq:abx}
        \lVert \varphi(\underline{x}^{*a}) - \varphi(\underline{x}^{*b}) \rVert_{\T^m} = \epsilon.
    \end{equation}
    Invoking \cref{prop:partition_MM}, there is a constant $c_3 = c_3(M)$ such that the solutions of $N(N-1)/2$ equations of the form \eqref{eq:abx} where $a, b \in R$ with $a < b$ partition $[-M, M]^m$ into $O_M(r_N^{c_3})$ regions.

    Thus there are at most $2^{(1/\epsilon)^m} \times O_M(r_N^{c_3}) = O_{\epsilon, M}(r_N^{c_3})$ distinct $R$-nice sets created by some $g \in H$. Since $E$ is sublacunary (i.e. $(\log r_N)/N \to 0$ as $N \to \infty$), 
    \[
        \lim_{N \to \infty} \frac{\log r_N^{c_3}}{\log 2^N} = 0.
    \]
    It follows that $O_{\epsilon, M}(r_N^{c_3}) < 2^N$ for sufficiently large $N$. This contradicts our assumption that there are always $2^N$ $R$-nice sets for every $R \subset E$ of the form $R = \{r_1, r_2, \ldots, r_N\}$.
\end{proof}
    
\subsection{Proof of the main theorem}    
We are ready to prove \cref{thm:main}.
\begin{proof}[Proof of \cref{thm:main}]
   Let $E = \{r_n\}_{n \in \N}$ be a sublacunary set. By \cref{thm:criterion_bohr_interpolation}, it suffices to show that there exist two disjoint subset $A , B \subset E$ that are not separabe by any $k$-step nilrotation. 
   
   Similar to the proof of \cref{lem:nilrecurrence_partition}, we use the fact that up to isomorphism, the set of pair $(G, \Gamma)$ where $G$ is a $k$-step nilpotent Lie group and $\Gamma$ is a discrete, cocompact subgroup is countable.
   Let $\mathcal{C}$ be a set of representatives of those non-isomorphic pairs $(G, \Gamma)$. For each $(G, \Gamma) \in \mathcal{C}$, we can partition $G$ into countably many compact subsets, say $G = \bigcup_{j=1}^{\infty} H_{G,j}$. It follows that the set $\mathcal{D} = \{(G, \Gamma, H_{G,j}, \epsilon): (G, \Gamma) \in \mathcal{C}, j \in \N, \epsilon \in \{1/n:n \in \N\} \}$ is countable. Enumerate $\mathcal{D}$ as $\{(G_n, \Gamma_n, H_n, \epsilon_n): n \in \N\}$.
   
   %Let $\mathcal{C}$ be a set of representatives of those non-isomorphic pairs $(G, \Gamma)$. For each $(G, \Gamma) \in \mathcal{C}$, we can partition $G$ into countably many compact subsets, say $G = \bigcup_{j=1}^{\infty} H_{G,j}$. It follows that the set $\mathcal{D} = \{(G, \Gamma, H_{G,j}): (G, \Gamma) \in \mathcal{C}, j \in \N \}$ is countable. Enumerate $\mathcal{D}$ as $\{(G_n, \Gamma_n, H_n): n \in \N\}$.
    
    By \cref{prop:not_epsilon_separable}, there exist $A_1, B_1 \subset E$ finite and disjoint that are not $\epsilon_1$-separable by any $g \in H_1$. Let $E_2 = E \setminus (A_1 \cup B_1)$. Since $A_1, B_1$ are finite, $E_2$ is still sublacunary. Hence there exist $A_2, B_2 \subset E_2$ finite and disjoint that are not $\epsilon_2$-separable by any $g \in H_2$. In general, by induction, we obtain two sequences of finite and pairwise disjoint sets $\{A_n\}_{n \in \N}$ and $\{B_n\}_{n \in \N}$ such that for each $n \in \N$, $A_n$ and $B_n$ are not $(\epsilon_n)$-separable by any $g \in H_n$. Let $A = \bigcup_{n=1}^{\infty} A_n$ and $B = \bigcup_{n=1}^{\infty} B_n$. Then $A$ and $B$ are disjoint subsets of $E$ by construction. 
    
    Note that if $A$ and $B$ are separable by a $k$-step nilrotation, there must be an $\epsilon > 0$ and a $k$-step nilsystem $(X, g)$  such that $A$ and $B$ are $\epsilon$-separable by $g$.
    Since $\bigcup_{n=1}^{\infty} H_n = \bigcup_{n=1}^{\infty} G_n$ and $\lim_{n \to \infty} 1/n = 0$, it follows from our construction that $A$ and $B$ are not separable by any nilrotation $g \in \bigcup_{n=1}^{\infty} G_n$. Because nilrotations on $\bigcup_{n=1}^{\infty} G_n$ represent all possible $k$-step nilrotations, our proof finishes.
\end{proof}

\section{New interpolation sets for Bohr almost periodic sequences}
%In this section, we provide a new class of $1$-step interpolation sets. As the result, we obtain a new collection of $2$-step interpolation sets that are not $1$-step interpolation sets. Furthermore, we give another proof of Graham and Hare \cite{???} 's result which states that the sum of a lacunary set and a finite set is an $1$-step interpolation set. 

In this section, we prove \cref{prop:union_of_lacunary_set_with_a_shift} about new examples of $I_0$-sets and the corresponding corollaries. 

\begin{proof}[Proof of \cref{prop:union_of_lacunary_set_with_a_shift}]
    First of all, if $\{t_n\}_{n \in \N}$ is a set of Bohr recurrence, then $\{r_n\}_{n \in \N}$ and $\{r_n + t_n\}_{n \in \N}$ are not separable by any Bohr rotation. Hence, in view of \cref{thm:criterion_bohr_interpolation}, the set $E = \{r_n\}_{n \in \N} \cup \{r_n + t_n\}_{n \in \N}$ is not an $I_0$-set.
    
    Suppose $\{t_n\}_{n \in \N}$ is not a set of Bohr recurrence. Then there exists $\alpha$ in some finite dimensional torus $\T^d$ and $\epsilon > 0$ such that $\lVert t_n \alpha \rVert_{\T^d} > \epsilon$ for all $n \in \N$.  Cover $\T^d$ by finitely many balls of diameter $\epsilon/2$, say $\T^d = \bigcup_{i = 1}^{\ell} B_i$. Then for  each $n \in \N$, $r_n \alpha$ and $(r_n + t_n) \alpha$ must belong to two disjoint balls.
    
    Partition $E$ into $2 \ell$ sets $F_i, F_i'$ for $1 \leq i \leq \ell$ as follows. First, let $F_1 = \{m \in E: m \alpha \in B_1\}$. Then, for each $n \in \N$, if $r_n \in F_1$, let $r_n + t_n \in F_1'$, and if $r_n + t_n \in F_1$ let $r_n \in F_1'$.
    Inductively, for $2 \leq i \leq \ell - 1$, let $E_{i+1} = E \setminus \bigcup_{k=1}^{i} (F_k \cup F_k')$ and $F_{i+1} = \{m \in E_{i+1}: m \alpha \in B_{i+1}\}$. For each $n \in \N$, if $r_n \in F_{i+1}$, let $r_n + t_n \in F_{i+1}'$, and if $r_n + t_n \in F_{i+1}$, let $r_n \in F_{i+1}'$.
    
    By construction, $F_1, \ldots, F_{\ell}, F_1', \ldots, F_{\ell}'$ are pairwise disjoint. Moreover, since $\T^d = \bigcup_{i=1}^{\ell} B_i$, we have $E \alpha  \subset \bigcup_{i=1}^{\ell} B_i$. It follows that $E = \bigcup_{i = 1}^{\ell} (F_i \cup F_i')$. Furthermore, since for every $n \in \N$, $r_n \alpha$ and $(r_n + t_n) \alpha$ belong to two disjoint balls, there are no $n \in \N$ and $A \in \{F_1, \ldots, F_{\ell}, F_1', \ldots, F_{\ell}'\}$ such that both $r_n$ and $r_n + t_n$ are elements of $A$. Hence, from our hypothesis that $\{r_n\}_{n \in \N}$ is lacunary and $\lim_{n \to \infty} t_n /r_n = 0$, each $A \in \{F_1, \ldots, F_{\ell}, F_1', \ldots, F_{\ell}'\}$ is lacunary. In particular, $A$ is an $I_0$-set. To show that $E$ is $I_0$, it remains to show that for $A, B \in \{F_1, \ldots, F_{\ell}, F_1', \ldots, F_{\ell}'\}$ distinct, $A$ and $B$ are separable by a Bohr rotation. 
    
    For $i, j \in \{1, \ldots, \ell\}$ distinct, there does not exist any $n \in \N$ such that both $r_n$ and $r_n + t_n$ belong to $F_i \cup F_j$. (Since if $r_n \in F_i$, then from the construction, $r_n + t_n$ must belong to $F_i'$, not $F_j$. Likewise, if $r_n + t_n \in F_i$, $r_n$ must be in $F_i'$, not $F_j$.) Hence, the set $F_i \cup F_j$ is lacunary. Because every lacunary set is an $I_0$-set, $F_i, F_j$ are separable by a Bohr rotation according to \cref{thm:criterion_bohr_interpolation}. By the same reason, $F_i,  F_j'$ and $F_i', F_j'$ are separable by Bohr rotations. 
    
    For $i \in \{1, \ldots, \ell\}$, $F_i \alpha \in B_i$ while $F_i' \alpha$ is in the complement of the ball having the same center as $B_i$ but with twice diameter. Hence, $F_i \alpha$ and $F_i' \alpha$ have disjoint closures in $\T^d$. In particular, $F_i$ and $F_i'$ are separable by $\alpha$.   
    
    We conclude that every two sets in $\{F_1, \ldots F_{\ell}, F_1', \ldots, F_{\ell}'\}$ are separable by a Bohr rotation. Therefore, $E$ is an $I_0$-set. 
\end{proof}

\begin{corollary}
\label{cor:2-step_Bohr}
    Let $\{r_n\}_{n \in \N} \subset \N$ be lacunary and let $\{t_n\}_{n \in \N} \subset \N$ be such that $\lim_{n \to \infty} t_n/r_n = 0$. Then $\{r_n\}_{n \in \N} \cup \{r_n + t_n\}_{n \in \N}$ is a $2$-step interpolation set. 
\end{corollary}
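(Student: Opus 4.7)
The plan is to invoke \cref{lem:dec-31-4} with the decomposition $E = \{r_n\}_{n \in \N} \cup \{r_n + t_n\}_{n \in \N}$, which reduces the corollary to showing (a) each of the two sets is individually a $2$-step interpolation set, and (b) they are separable by a $2$-step nilrotation.

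For (a), the set $\{r_n\}_{n \in \N}$ is lacunary by hypothesis. The set $\{r_n + t_n\}_{n \in \N}$ is eventually lacunary because $t_n/r_n \to 0$ gives $(r_{n+1}+t_{n+1})/(r_n+t_n) \to r_{n+1}/r_n \geq \rho > 1$ for $n$ large. Strzelecki's theorem ensures lacunary sets are $I_0$-sets, hence $2$-step interpolation sets, and \cref{thm:union_with_finite_set} allows adjoining any finite initial segment.

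For (b), I split into two cases. If $\{t_n\}_{n \in \N}$ is not a set of Bohr recurrence, \cref{prop:union_of_lacunary_set_with_a_shift} already produces an $I_0$-extension of $E$, which in particular separates the two sets by a $1$-step (Bohr) rotation. If $\{t_n\}_{n \in \N}$ is a set of Bohr recurrence, I would use the $2$-step nilsystem $(\T^2, T)$ with $T(x, y) = (x + \alpha, y + x)$; its orbit of $(0, 0)$ at time $m$ equals $(m\alpha, \binom{m}{2}\alpha) \bmod \Z^2$, so the displacement at times $r_n$ and $r_n + t_n$ equals $\bigl(t_n\alpha,\; (r_n t_n + \binom{t_n}{2})\alpha\bigr) \bmod \Z^2$.

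The main obstacle is choosing $\alpha$ so this displacement vector avoids a neighborhood of $(0,0) \bmod \Z^2$ uniformly in $n$. The plan is to exploit the lacunarity of $\{r_n\}$: by a Pollington-type construction, pick $\alpha$ with $\lVert r_n \alpha \rVert \geq \delta > 0$ for every $n$. For indices $n$ where $\lVert t_n \alpha\rVert$ happens to be small---as must occur on some subsequence when $\{t_n\}$ is Bohr recurrent---write $t_n\alpha = k_n + \eta_n$ with $\lvert\eta_n\rvert$ small, so that $r_n t_n\alpha \equiv r_n\eta_n \pmod{1}$ and the second displacement coordinate becomes $r_n \eta_n + \binom{t_n}{2}\alpha \bmod 1$; leveraging the Diophantine structure of $\alpha$ (chosen from the lacunarity of $\{r_n\}$) one shows this coordinate stays bounded away from $0 \bmod 1$. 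Once the displacement is controlled, a partition argument in the spirit of the proof of \cref{prop:union_of_lacunary_set_with_a_shift} (covering $\T^2$ by small balls and indexing one orbit point per ball) yields the required separation, completing the proof via \cref{lem:dec-31-4}.
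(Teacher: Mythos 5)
Your proposal diverges substantially from the paper's proof, which is a short reduction rather than a direct construction. The paper records the fact (from earlier work of the author, used also for \cref{cor:2-step-1-step}) that $\{s_n\}$ is a $2$-step interpolation set whenever $\{s_n^2\}$ is an $I_0$-set, expands $(r_n+t_n)^2 = r_n^2 + (2r_nt_n + t_n^2)$, checks that $\{2r_nt_n + t_n^2\}$ is lacunary --- hence not a set of Bohr recurrence --- and then invokes \cref{prop:union_of_lacunary_set_with_a_shift} on the squared set $\{r_n^2\} \cup \{(r_n+t_n)^2\}$. No case split on whether $\{t_n\}$ is Bohr recurrent arises, and no new nilsystem ever needs to be analysed.

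Your Bohr-recurrent case is where the gaps lie, and that is exactly the case the squaring trick eliminates. First, the Diophantine claim that some $\alpha$ keeps $(r_n t_n + \binom{t_n}{2})\alpha$ bounded away from $0 \bmod 1$ while $t_n\alpha$ becomes small is asserted but not established: an $\alpha$ chosen merely so that $\lVert r_n\alpha\rVert \geq \delta$ gives no control over $r_n\eta_n + \binom{t_n}{2}\alpha$, and the argument as sketched cannot be run. Second and more fundamentally, controlling the displacement between $T^{r_n}(0,0)$ and $T^{r_n+t_n}(0,0)$ only handles the diagonal pairs $n = m$; separability by a nilrotation requires the closures of $\{T^{r_n}(0,0)\}_n$ and $\{T^{r_m+t_m}(0,0)\}_m$ to be disjoint, which also constrains the off-diagonal pairs $n \neq m$, and your appeal to ``a partition argument in the spirit of \cref{prop:union_of_lacunary_set_with_a_shift}'' is left entirely unfilled. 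The decisive algebraic observation you are one step away from: the second displacement coordinate $r_n t_n + \binom{t_n}{2}$ equals $\tfrac{1}{2}\bigl((r_n+t_n)^2 - r_n^2 - t_n\bigr)$, so its behaviour is governed by the difference of squares; once you notice that the squared set has precisely the shape required by \cref{prop:union_of_lacunary_set_with_a_shift} with a \emph{lacunary} shift $\{2r_nt_n + t_n^2\}$, the entire skew-product analysis collapses to the paper's proof and \cref{lem:dec-31-4} is no longer needed.
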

\begin{proof}
   Note that if $(\psi(n))_{n \in \N}$ is an $1$-step nilsequence, $(\psi(n^2))_{n \in \N}$ is a $2$-step nilsequence.  
   It follows that a set $\{s_n\}_{n \in \N}$ is a $2$-step interpolation set if $\{s_n^2\}_{n \in \N}$ is an $1$-step interpolation set (See the proof of \cite[Proposition 1.7]{Le_2019_1}).

   Let $\{r_n\}_{n \in \N}$ and $\{t_n\}_{n \in \N}$ satisfy the hypothesis of the corollary. Consider the set $F = \{r_n^2\}_{n \in \N} \cup \{(r_n + t_n)^2\}_{n \in \N} = \{r_n^2\}_{n \in \N} \cup \{r_n^2 + 2r_n t_n + t_n^2\}_{n \in \N}$. Since $\lim_{n \to \infty} t_n/r_n = 0$ and $\{t_n\}_{n \in \N}$ is increasing, we have 
   \[
    \liminf_{n \to \infty} \frac{2 r_{n+1} t_{n+1} + t_{n+1}^2}{2 r_n t_n + t_n^2} \geq \liminf_{n \to \infty} \frac{2 r_{n+1} + t_{n+1}}{2 r_n + t_n} \geq \liminf_{n \to \infty} \frac{r_{n+1}}{r_n}.
   \]
   Because $\{r_n\}_{n \in \N}$ is lacunary, $\{2r_n t_n + t_n^2\}_{n \in \N}$ is also lacunary. Hence it is not a set of Bohr recurrence \cite{Furstenberg81a}. Furthermore, we also have
   \[
    \lim_{n \to \infty} \frac{2 r_n t_n + t_n^2}{r_n^2} = 0.
    \]
    
    In view of \cref{prop:union_of_lacunary_set_with_a_shift}, the set $F$ is an $I_0$-set. From our discussion at the beginning, the set $\{r_n\}_{n \in \N} \cup \{r_n + t_n\}_{n \in \N}$ is a $2$-step interpolation set.
\end{proof}

\cref{cor:2-step-1-step} now follows from \cref{prop:union_of_lacunary_set_with_a_shift} and \cref{cor:2-step_Bohr}. Next we obtain another corollary of \cref{prop:union_of_lacunary_set_with_a_shift}.
\begin{corollary}[{\cite[Corollary 2.7.9]{Graham_Hare_2013}}]
\label{cor:lac_union_shift}
    Let $E \subset \N$ be lacunary and let $F \subset \N \cup \{0\}$ be finite. Then $E + F := \{m + n: m \in E, n \in F\}$ is an $I_0$-set.
\end{corollary}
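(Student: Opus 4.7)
The plan is to decompose $E + F$ into finitely many lacunary pieces whose pairwise unions fall under the hypotheses of Proposition \ref{prop:union_of_lacunary_set_with_a_shift}, and then to assemble those pieces using the $k=1$ cases of Lemmas \ref{lem:dec-31-3} and \ref{lem:dec-31-4} (which apply because $1$-step nilsystems are torus rotations, and $1$-step interpolation sets are exactly $I_0$-sets).

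Write $F = \{f_1 < f_2 < \cdots < f_\ell\}$ and $E = \{r_n\}_{n \in \N}$. Since the union of an $I_0$-set with a finite set is an $I_0$-set by Ryll-Nardzewski, I may discard a finite initial segment of $E$ and assume $r_{n+1} - r_n > f_\ell - f_1$ for every $n$. Setting $F_i := E + f_i$ for $i = 1, \ldots, \ell$, this guarantees that $F_1, \ldots, F_\ell$ are pairwise disjoint, and each $F_i$ is a shift of a lacunary set, hence is itself lacunary and therefore an $I_0$-set by Strzelecki's theorem.

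The crucial step is the pairwise Bohr-separation of the $F_i$'s. For $i < j$, I would apply Proposition \ref{prop:union_of_lacunary_set_with_a_shift} to $F_i \cup F_j = \{r_n + f_i\}_n \cup \{(r_n + f_i) + (f_j - f_i)\}_n$: the base sequence $r_n + f_i$ is lacunary, the ratio $(f_j - f_i)/(r_n + f_i) \to 0$, and the singleton shift set $\{f_j - f_i\}$ is not a set of Bohr recurrence, because for any irrational $\alpha$ the point $(f_j - f_i)\alpha$ is bounded away from $0$ in $\T$. The Proposition then yields that $F_i \cup F_j$ is an $I_0$-set, and Theorem \ref{thm:criterion_bohr_interpolation} applied to the disjoint subsets $F_i, F_j \subset F_i \cup F_j$ produces a Bohr rotation separating them.

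Finally, I would induct on $\ell$: assuming $F_1 \cup \cdots \cup F_{\ell-1}$ is an $I_0$-set, the pairwise separations of $F_\ell$ from each $F_i$ with $i < \ell$ combine via iterated use of the $k=1$ case of Lemma \ref{lem:dec-31-3} into a single Bohr rotation separating $F_\ell$ from $F_1 \cup \cdots \cup F_{\ell-1}$, and the $k=1$ case of Lemma \ref{lem:dec-31-4} then merges the two into the $I_0$-set $E + F$. I do not anticipate any serious obstacle; the main conceptual point is recognizing that Proposition \ref{prop:union_of_lacunary_set_with_a_shift}, applied to a constant shift by a positive integer, already encodes precisely the pairwise Bohr-separation required by Corollary \ref{cor:lac_union_shift}.
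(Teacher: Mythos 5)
Your proposal is correct and follows essentially the same route as the paper: reduce to pairwise Bohr-separability of the translates $E + f_i$ and $E + f_j$, then apply \cref{prop:union_of_lacunary_set_with_a_shift} with the constant shift $t_n = f_j - f_i$, which is not a set of Bohr recurrence. The paper's proof is terser and leaves implicit the trimming of an initial segment of $E$ (needed so the translates are actually disjoint) and the final assembly via \cref{lem:dec-31-3} and \cref{lem:dec-31-4}, both of which you rightly spell out.
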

\begin{proof}
   By Hartman-Ryll-Nardzewski criterion (\cref{thm:criterion_bohr_interpolation}), it suffices to show $E+i$ and $E+j$ are separable by a Bohr rotation for any $i \neq j \in F$. But this follows from \cref{prop:union_of_lacunary_set_with_a_shift} with $t_n = j - i$ for $n \in \N$.
\end{proof}

\begin{remark}\label{rem:lacunary_plus_finite_set}
In contrast with \cref{cor:lac_union_shift}, it is not true in general that the sum of an $I_0$-set and a finite set is an $I_0$-set. For example, let $E = \{2^n\}_{n \in \N} \cup \{2^n + 2n - 1\} _{n \in \N}$. Then $E$ is an $I_0$-set. However, $E + \{0, 1\} = \{2^n\}_{n \in \N} \cup \{2^n + 2n - 1\}_{n \in \N} \cup \{2^n + 1\}_{n \in \N} \cup \{2^n + 2n\}_{n \in \N}$ is not an $I_0$-set because $\{2^n\}_{n \in \N}$ and $\{2^n+2n\}_{n \in \N}$ are not separable by any Bohr rotation.
\end{remark}

\section{Open questions}
As shown in \cite{Le_2019_1} and also \cref{cor:2-step-1-step}, there are $2$-step interpolation sets that are not $I_0$-sets. It is natural to ask to what extent this feature holds for higher step nilsequences. The following question has been asked in \cite{Le_2019_1} and is currently still open.

\begin{question}
    For $k \in \N$, does there exist a $(k+1)$-interpolation set that is not a $k$-interpolations set?
\end{question}

A topological system $(X, T)$ with a metric $d$ on $X$ is called \emph{distal} if for every $x, y \in X$ distinct, $\inf_{n \in \N} d(T^n x, T^n y) > 0$. A \emph{distal sequence} is a sequence of the form $(F(T^nx))_{n \in \N}$ for $n \in \N$ where $F$ is a continuous function on a distal system $(X, T)$ and $x \in X$. It is shown in \cite{Auslander_Green_Hahn63} that nilsystems are distal, therefore every nilsequence is a distal sequence. It is then of interest to study the notion of interpolation sets to distal sequences. 
\begin{definition}
    A set $E \subset \N$ is called an \emph{interpolation set for distal sequences} if for every bounded function $f: E \to \C$, there exists a distal sequence $\psi: \N \to \C$ such that $\psi(n) = f(n)$ for $n \in E$. 
\end{definition}
Based on \cref{thm:main} and \cref{thm:union_with_finite_set}, there are two questions we can ask.
\begin{question}
    Let $E \subset \N$ be an interpolation set for distal sequences and $F \subset \N$ finite. Is it true that $E \cup F$ is an interpolation set for distal sequences?
\end{question}
\begin{question}
    Is it true that no sublacunary set is an interpolation set for distal sequences?
\end{question}

\bibliography{refs}
\bibliographystyle{plain}
\end{document}